\newtheorem{corollary}{\bf Corollary}
\newtheorem{lemma}{\bf Lemma}
\newtheorem{proposition}{\bf Proposition}
\newtheorem{remark}{Remark}
\newtheorem{theorem}{\bf Theorem}
\theoremstyle{definition}
\newtheorem{example}{\bf Example}
\numberwithin{equation}{section}
\begin{document}
	
	\title[Quasi-Einstein manifolds]{On quasi-Einstein manifolds \\with constant scalar curvature }
	\author[J. Costa, E. Ribeiro Jr and M. Santos]{Johnatan Costa, Ernani Ribeiro Jr and M\'arcio Santos}

\address[J. Costa]{Instituto Federal de Educa\c c\~ao, Ci\^encia e Tecnologia do Sert\~ao Pernambucano - IFSert\~aoPE, Campus Ouricuri, 56200-000 - Ouricuri - PE, Brazil.}\email{costajohnatan31@gmail.com}	

\address[E. Ribeiro Jr]{Universidade Federal do Cear\'a - UFC, Departamento  de Ma\-te\-m\'a\-ti\-ca, Campus do Pici, Av. Humberto Monte, Bloco 914, 60455-760, Fortaleza - CE, Brazil.}\email{ernani@mat.ufc.br}
	
\address[M. Santos]{Departamento de Matem\'atica, Universidade Federal da Para\'iba - UFPB, 58051-900, Jo\~ao Pessoa - PB, Brazil.}
\email{marcio.academico@ufpb.br}

\thanks{M. Santos was partially supported by CNPq/Brazil [Grant: 306524/2022-8]}
\thanks{E. Ribeiro was partially supported by CNPq/Brazil [Grant: 309663/2021-0 \& 403344/2021-2], CAPES/Brazil and FUNCAP/Brazil [Grant: ITR-0214-00116.01.00/23].}

%\thanks{Corresponding Author: E. Ribeiro (ernani@mat.ufc.br)}

\begin{abstract}
In this article, we study quasi-Einstein manifolds with cons\-tant scalar curvature. We provide a classification of compact and noncompact (possibly with boundary) $T$-flat quasi-Einstein manifolds with constant scalar curvature, where the $T$-tensor is directly related to the Cotton and Weyl tensors. Moreover, we construct new explicit examples of noncompact quasi-Einstein manifolds. In addition, we prove a complete classification of compact and noncompact (possibly with boundary) $3$-dimensional $m$-quasi-Einstein manifolds with constant scalar curvature. 
\end{abstract}
	
\date{\today}
	
\keywords{quasi-Einstein manifolds; constant scalar curvature; Einstein manifolds; warped product.}  \subjclass[2020]{Primary 53C25, 53C20, 53E20}
	
\maketitle
	
\section{Introduction}\label{SecInt}

A  Riemannian manifold $(M^n,\,g)$, $n\geq 2,$ possibly with boundary $\partial M,$ is called an {\it $m$-quasi-Einstein manifold} if there exists a smooth potential function $u$ on $M^n$ satisfying the system
\begin{equation}
	\label{eq-qE}
	\left\{%
	\begin{array}{lll}
		\displaystyle \nabla^{2}u = \dfrac{u}{m}(Ric-\lambda g) & \hbox{in $M,$} \\
		\displaystyle u>0 & \hbox{on $int(M),$} \\
		\displaystyle u=0 & \hbox{on $\partial M,$} \\
	\end{array}%
	\right.
\end{equation} for some constants $\lambda$ and $0<m<\infty,$ see, for instance, \cite{CaseShuWei,HPW,HPW2}. We say that an $m$-quasi-Einstein manifold is {\it trivial} if the potential function $u$ is constant; otherwise, it is called {\it nontrivial}. It is worth noting that any $m$-quasi-Einstein manifold with $m>1$ admits a constant $\mu$ such that
\begin{eqnarray}
\label{eqmu}
	\mu=u\Delta u+(m-1)|\nabla u|^2+\lambda u^2.
\end{eqnarray} According to \cite{HPW2}, an $m$-quasi-Einstein manifold $(M^n,\,g,\,u,\lambda)$ is said to be {\it rigid} if it is Einstein or if its universal cover is isometric to a Riemannian product of Einstein manifolds. For more details, see, e.g., \cite{Besse,CaseShuWei,CasePJM,CMMM,compact,remarks,HPW,HPW2,Rimoldi,WangQEM1,WangQEM2} and the references
therein.

As pointed out by Besse \cite[p. 267]{Besse}, quasi-Einstein manifolds naturally arise as the base of warped product Einstein metrics. They also appear in the context of smooth metric measure spaces and diffusion operators, as studied by Bakry and
\'Emery \cite{bakry}. In the case $m=1,$  it is common to additionally assume $\Delta u=-\lambda u$ in order to recover the so-called {\it static spaces}, of particular interest in general relativity. Moreover, quasi-Einstein manifolds are also associated with the geometry of degenerate Killing horizons and horizon limits (see, e.g., \cite{BGKW,BGKW2,KWW,Wylie}). By setting $u=e^{-\frac{f}{m}}$ in (\ref{eq-qE}), one sees that when $\partial M=\emptyset,$ an $\infty$-quasi-Einstein manifold corresponds precisely to a gradient Ricci soliton \cite{CaoSurvey,Hamilton}. Given their significance, it is natural to seek classification results for quasi-Einstein manifolds.

In \cite{BRS,Bergery,Besse,CaseP} and \cite{WangQEM2}, the authors presented some examples of complete quasi-Einstein manifolds with $\lambda<0$ and arbitrary $\mu$, as well as examples of quasi-Einstein manifolds with $\lambda=0$ and $\mu>0.$ In contrast, Case \cite{CasePJM} proved that any complete quasi-Einstein manifold with $\lambda=0$ and $\mu\leq 0$ must be trivial. Furthermore, by the works of Qian \cite{Qian}, Kim-Kim \cite{kk} and He-Petersen-Wylie \cite{HPW2}, it is known that a complete $m$-quasi-Einstein manifold is compact if and only if $\lambda>0.$ For the case of compact with boundary manifolds, we refer to \cite[Theorem 4.1]{HPW2}. Therefore, any nontrivial (possibly with boundary) noncompact $m$-quasi-Einstein manifold has $\lambda\leq 0.$ For a comprehensive overview, we refer the reader to \cite{Barros,BRS,BRR,Besse,CaseShuWei,CMMM,CH,ChengRZhou,Fl-Gr2,HPW,HPW2,Ernani_Keti,Rimoldi,
WangQEM1,WangQEM2}.

In \cite{HPW2}, He, Petersen and Wylie showed that a nontrivial quasi-Einstein manifold $(M^n,\,g,\,u)$ with constant Ricci curvature must be isometric to either the standard hemisphere $\mathbb{S}_+^n,$ or $[0,\infty)\times F$ with the product metric $g=dt^2+g_{F},$ or the hyperbolic space $\mathbb{H}^n,$ or $[0,\infty)\times N$ equipped with the metric $g=dt^2+\sqrt{-k}\cosh^2(\sqrt{-k}t)g_{\mathbb{S}^{n-1}},$ or $\mathbb{R}\times F$ with the metric $g=dt^2+e^{2\sqrt{-k}t}g_{F};$ where $F$ is Ricci flat, $N$ is an Einstein metric with negative Ricci curvature and $k=\frac{\lambda}{m+n-1}.$

This result of He-Petersen-Wylie \cite[p. 160]{HPW2} leaves open the question of \textit{whether a complete classification of quasi-Einstein manifolds with constant scalar curvature can be established}. This line of investigation is also motivated by a recent classification of gradient shrinking Ricci solitons with constant scalar curvature by Cheng and Zhou \cite{ChengZhou}, and Fern\'andez-L\'opez and Garc\'ia-R\'io \cite{Fl-Gr}. In this context, by the work of Case-Shu-Wei \cite{CaseShuWei}, it is known that any compact quasi-Einstein manifold without boundary and with constant scalar curvature must be tri\-vial. Nevertheless, there exist known examples of nontrivial compact quasi-Einstein manifolds with boundary and constant scalar curvature, such as the standard hemisphere $\mathbb{S}_+^n$, the cylinder $I \times \mathbb{S}^{n-1}$ equipped with the product metric, and an example\footnote{This example was later adapted to produce a counterexample to the Cosmic No-Hair Conjecture in arbitrary dimensions $n\geq 4$, see \cite{CDPR}.} constructed by Di\'ogenes, Gadelha and Ribeiro \cite{remarks} on $\mathbb{S}^{p+1}_{+}\times \mathbb{S}^q$, $q>1,$ endowed with the metric $g=dr^2+\sin^2r g_{\mathbb{S}^p}+\frac{q-1}{p+m}g_{\mathbb{S}^q}$. 

It follows from the classical Reilly’s formula \cite[Theorem B]{Reilly} that the hemisphere $\mathbb{S}^2_{+}$ is the only nontrivial $2$-dimensional simply connected compact $m$-quasi-Einstein manifold with boundary and constant scalar curvature (see also \cite[pg. 271]{Besse}). Recently, inspired by ideas outlined by Cheng and Zhou \cite{ChengZhou} and Fern\'andez-Lop\'ez and Garc\'ia-R\'io \cite{Fl-Gr}, Costa, Ribeiro and Zhou \cite{costa-ribeiro-zhou} proved that a $3$-dimensional simply connected compact quasi-Einstein manifold with boundary and constant scalar curvature is isometric to either the standard hemisphere $\mathbb{S}^{3}_{+},$ or the cylinder $I\times \mathbb{S}^{2}$ with product metric. Moreover, they showed that a simply connected $4$-dimensional compact quasi-Einstein manifold with boundary and constant scalar curvature is isometric to one of the following: the standard hemisphere $\mathbb{S}^{4}_{+},$ the cylinder $I\times \mathbb{S}^{3}$ with product metric, or $\mathbb{S}^{2}_+\times\mathbb{S}^{2}$ also equipped with the product metric. However, the rigidity problem in dimensions $n\geq 5$ remains open.

In this article, following our very recent work \cite{costa-ribeiro-zhou}, we investigate the classification of noncompact (possibly with boundary) quasi-Einstein manifolds with constant scalar curvature. In this setting, as previously mentioned, we necessarily have $\lambda\leq 0.$ In particular, it is known from \cite[Theorem 1.8]{HPW2} and \cite{CaseShuWei} that if a quasi-Einstein manifold with $\lambda=0$ has constant scalar curvature, then it is either Ricci flat without boundary or $([0,\infty)\times F,\,g=dr^2+g_{F}),$ where $(F,\,g_{F})$ is Ricci flat and $u$ is a linear function. For dimension $n=2,$ it follows from Besse's book \cite[pg. 271]{Besse} and \cite[Proposition 2.4]{HPW2} that a noncompact $2$-dimensional quasi-Einstein manifold with constant scalar curvature is isometric to one of the following:  $[0,\infty)\times\mathbb{R}$ equipped with the product metric $g=dr^2+dt^2;$ $[0,\infty)\times\mathbb{R}$ with the metric $g=dr^2+\cosh^2(r)dt^2;$ $\mathbb{R}\times\mathbb{R}$ endowed with the metric $g=dr^2+e^{2r}dt^2;$ or the hyperbolic plane $\mathbb{H}^2$ with its standard metric $g=dr^2+\sinh^2(r)dt^2.$ In dimension $n = 3,$ based on the study of B\"ohm metrics \cite{Bohm1,Bohm2}, it is known that there exist more examples of $3$-dimensional quasi-Einstein manifolds than gradient Ricci solitons. According to \cite[Theorem 1.3]{HPW2}, any noncompact $3$-dimensional $m$-quasi-Einstein manifold without boundary, with constant scalar curvature and $m>1$ must be rigid. This, in turn, implies that it is a quotient of either $\mathbb{R}^3,$ or $ \mathbb{R}\times \mathbb{H}^2,$ or $\mathbb{H}^3,$ each endowed with its standard metric. In dimension $n = 4$, however, as shown by He-Petersen-Wylie \cite[Theorem 1.5]{HPW2}, for each $m$, there exist examples of $4$-dimensional quasi-Einstein manifolds with constant scalar curvature that are not rigid. Indeed, the $4$-dimensional case appears to be significantly more intriguing.

Before presenting our first main result, we introduce a new example of a quasi-Einstein manifold with $\lambda < 0$, $\mu < 0$ and constant scalar curvature in arbitrary dimension $n \geq 4$.

\begin{example}
\label{exA}
Let $M^n=\mathbb{H}^{p+1}\times\mathbb{H}^q$, $q>1$, with the product metric 
	\begin{eqnarray*}
		g=dr^2+\sinh^2(r)g_{\mathbb{S}^{p}}+\frac{q-1}{m+p}g_{\mathbb{H}^q},
	\end{eqnarray*} 
	where $r(x,y)=r(x)$ is the height function of $\mathbb{H}^{p+1}$. Considering the potential function $u(r)=\cosh(r)$ and $\lambda=-(m+p)$, one deduces that $(M^n,\,g,\,u,\,\lambda)$ satisfies the system \eqref{eq-qE} and $\mu=-(m-1).$
		\end{example}
			
		We point out that Example \ref{exA} is rigid but not Einstein. A detailed description of Example \ref{exA} will be presented in Section \ref{Sec2}. Taking this new example into account, together with the result by He-Petersen-Wylie \cite[Theorem 1.5]{HPW2} (see also Proposition \ref{prop-hpw}), it appears that a complete classification of noncompact quasi-Einstein manifolds with constant scalar curvature is difficult to achieve.

As was observed in \cite{compact}, on an $m$-quasi-Einstein manifold, we may express the Cotton tensor $C_{ijk}$ and the Weyl tensor $W_{ijkl}$ in terms of an auxiliary $3$-tensor $T_{ijk}$ as follows
	\begin{equation}\label{eq31}
		uC_{ijk}=mW_{ijkl}\nabla_l u+T_{ijk},
	\end{equation} where the $3$-tensor $T_{ijk}$ is given by
\begin{eqnarray}
\label{tensorTT}
		T_{ijk}&=&\frac{m+n-2}{n-2}(R_{ik}\nabla_j u-R_{jk}\nabla_i u)+\frac{m}{n-2}(R_{jl}\nabla_l ug_{ik}-R_{il}\nabla_l ug_{jk})\nonumber\\
		& &+\frac{(n-1)(n-2)\lambda+m R}{(n-1)(n-2)}(\nabla_i ug_{jk}-\nabla_j ug_{ik})\\&&-\frac{u}{2(n-1)}(\nabla_i R g_{jk}-\nabla_j R g_{ik}).\nonumber
	\end{eqnarray} Interestingly, the tensor\footnote{We  emphasize that the tensor $T_{ijk}$ differs from the tensor $D_{ijk}$ introduced in \cite{CH}. Specifically, they are related by $T_{ijk}=\frac{m+n-2}{m}u D_{ijk};$ see Eq. (\ref{eqDTa}).} $T_{ijk}$ shares the same symmetry properties as the Cotton tensor. Furthermore, it is easy to verify that local conformal flatness (i.e., $W_{ijkl} = 0,$ for $n\geq 4$) implies $T_{ijk} = 0.$ Also, by combining \cite[Theorem 1.2]{CH} and (\ref{eq31}), one deduces that any Bach-flat compact $m$-quasi-Einstein manifold with $m>1$ has $T_{ijk} = 0.$ We emphasize that $T_{ijk}$ plays a key role in the study of quasi-Einstein manifolds and is primarily inspired by the tensor $D_{ijk}$, introduced by Cao and Chen in \cite{CC} in the context of gradient Ricci solitons; see also \cite{CaoCat,CH} and \cite{BDE,QY}. The tensor $D_{ijk}$ was fundamental to the classification of Bach-flat gradient Ricci solitons by Cao and Chen \cite{CC} and Cao, Catino, Chen, Mantegazza and Mazzieri \cite{CaoCat}. More recently, Cao and Yu \cite{CYu} classified $D$-flat gradient steady Ricci solitons. In particular, their proof also extends to shrinking and expanding gradient Ricci solitons.

	In the sequel, motived by \cite{CYu}, we establish a classification result for $T$-flat $m$-quasi-Einstein manifolds with constant scalar curvature and $m>1.$ More precisely, we have the following theorem. 

\begin{theorem}
    \label{ThmB}
Let $(M^n,\,g,\,u,\,\lambda),$ $n\geq 3,$ be a nontrivial simply connected $m$-quasi-Einstein manifold with constant scalar curvature $R$ and $m>1.$ Then the tensor $T$ vanishes identically if and only if $(M^n,\,g)$ is isometric to one of the following: 

\begin{enumerate}
\item[(a)] When $\lambda>0$: 
\begin{enumerate}
\item[(i)] the standard hemisphere $\mathbb{S}_+^n;$ 
\item[(ii)] $I\times N$ with the product metric; 
\end{enumerate}
\item[(b)] When $\lambda=0$:
\begin{enumerate}
\item[(iii)]  $[0,\infty)\times F,$ $g=dt^2+g_{F}$ and $u(t,x)=Ct;$
\end{enumerate}
\item[(c)] When $\lambda<0$:
\begin{enumerate}
\item[(iv)] the hyperbolic space $\mathbb{H}^n,$ $g=dt^2+\sqrt{-k}\sinh^2(\sqrt{-k} t)g_{\mathbb{S}^{n-1}}$ and $u=C\cosh (\sqrt{-k}t);$
\item[(v)]  $[0,\infty)\times N,$ $g=dt^2+\sqrt{-k}\cosh^2(\sqrt{-k}t)g_{\mathbb{S}^{n-1}}$ and $u(t,x)=C\sinh(\sqrt{-k}t);$ 
\item[(vi)]  $\mathbb{R}\times F,$ $g=dt^2+e^{2\sqrt{-k}t}g_{F}$ and $u(t,x)=Ce^{2\sqrt{-k}t};$ 
\item[(vii)] $\mathbb{R}\times N$ equipped with product metric and $u(t,x)=\cosh(t);$ 
\item[(viii)] $\mathbb{R}\times N$ equipped with product metric and $u(t,x)=e^{t};$ 
\item[(ix)] $[0,\infty)\times N$ with the product metric and $u(t,x)=\sinh(t);$  
\end{enumerate}
\end{enumerate}
where $F$ is Ricci flat, $C$ is
an arbitrary positive constant and $N$ is a $\lambda$-Einstein manifold.
\end{theorem}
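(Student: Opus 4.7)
The proof has two directions. For the "if" direction, each candidate metric in (i)--(ix) is a (possibly trivial) warped product $dt^{2}+h(t)^{2}g_{N}$ over an Einstein or Ricci-flat fibre whose potential is a function of $t$; the Ricci eigenvalues $R(\partial_{t},\partial_{t})$ and $R_{ab}/g_{ab}$ are constants that make all three brackets of \eqref{tensorTT} cancel, and I would verify this case by case.

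For the converse, assume $R$ is constant, $m>1$, and $T\equiv 0$. The identity \eqref{eqmu} combined with the trace $\Delta u=\tfrac{u}{m}(R-n\lambda)$ of \eqref{eq-qE} yields
$$(m-1)|\nabla u|^{2}=\mu-\tfrac{u^{2}}{m}(R-n\lambda)-\lambda u^{2},$$
so $|\nabla u|^{2}$ is a polynomial in $u$ and is constant on every regular level set. Contracting $T_{ijk}\nabla^{k}u$ using \eqref{tensorTT}, the coefficient $\tfrac{m+n-2}{n-2}-\tfrac{m}{n-2}=1$ appears in front of $R_{ik}\nabla^{k}u\,\nabla_{j}u-R_{jk}\nabla^{k}u\,\nabla_{i}u$ and the third bracket drops out, forcing $Ric(\nabla u)$ to be parallel to $\nabla u$; taking the divergence of \eqref{eq-qE} pins the eigenvalue to the constant $\alpha:=\tfrac{(n-1)\lambda-R}{m-1}$.

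Next, choose an orthonormal frame with $e_{n}=\nabla u/|\nabla u|$ and $e_{1},\ldots,e_{n-1}$ tangent to the level sets. Evaluating $T_{anb}$ with $a,b$ horizontal causes every term with a $\nabla_{a}u$ or $R_{an}$ factor to vanish and leaves
$$\tfrac{m+n-2}{n-2}R_{ab}+\tfrac{m}{n-2}\,\alpha\,g_{ab}-\tfrac{(n-1)(n-2)\lambda+mR}{(n-1)(n-2)}\,g_{ab}=0,$$
which, together with the trace $R=\alpha+(n-1)\phi$, forces $R_{ab}=\phi\,g_{ab}$ with the constant $\phi:=\tfrac{mR-(n-1)\lambda}{(n-1)(m-1)}$. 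Thus $Ric$ has exactly two constant eigenvalues on the splitting $\mathbb{R}\cdot\nabla u\oplus(\nabla u)^{\perp}$. The quasi-Einstein equation restricted to $(\nabla u)^{\perp}$ then reads $\nabla^{2}u|_{(\nabla u)^{\perp}}=\tfrac{u(\phi-\lambda)}{m}\,g|_{(\nabla u)^{\perp}}$, so the second fundamental form of each regular level set is a scalar (depending only on $u$) times the induced metric. This total umbilicity, together with the fact that $|\nabla u|$ is a function of $u$, implies by a standard argument that the metric is locally a warped product $g=dt^{2}+h(t)^{2}g_{N}$ with $u=u(t)$.

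Plugging $R_{ab}=\phi\,g_{ab}$ into the warped-product Ricci formula and differentiating in $t$ yields $2hh'(\phi-\alpha)=0$, so either $h$ is constant (product metric, which forces $\phi=\lambda$ and hence $\alpha=0$) or the total space is Einstein ($\phi=\alpha$). The remaining ODE system
$$h''=-\tfrac{\alpha}{n-1}h,\qquad u''=\tfrac{u(\alpha-\lambda)}{m},\qquad \frac{u'h'}{h}=\tfrac{u(\phi-\lambda)}{m},$$
together with the first integral $(u')^{2}=|\nabla u|^{2}$ from the first step and the constant Einstein condition on $(N,g_{N})$, is elementary to integrate; splitting by $\mathrm{sgn}(\lambda)$ and, within each case, by $\mathrm{sgn}(\alpha)$ gives precisely the nine models (i)--(ix). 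Simple connectedness promotes the local warped product to a global one, and zeros of $u$ either close off smoothly into a pole (as at the axis of $\mathbb{S}_{+}^{n}$ or $\mathbb{H}^{n}$) or form the boundary $\partial M$. I expect the main obstacle to be this final bookkeeping: matching every admissible solution of the ODE system to exactly one entry of the list, ensuring completeness and regularity at each endpoint of the $t$-interval, and confirming that no extraneous branches survive after imposing simple connectedness.
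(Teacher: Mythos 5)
Your argument is correct in outline and arrives at the same list, but it is organized quite differently from the paper's proof. The paper pivots on Proposition \ref{propT0}: the norm identity from Lemma \ref{TRic} together with \eqref{normricdu} shows at the outset that $T\equiv 0$ forces $R=\frac{n(n-1)}{m+n-1}\lambda$ or $R=(n-1)\lambda$. The first value gives $\mathring{Ric}=0$, so that branch is settled by citing \cite[Proposition 2.4]{HPW2}; in the second branch the paper shows all Ricci eigenvalues are constant, deduces $W_{ijkl}\nabla_l u=0$, and invokes \cite[Theorem 1.2]{HPW} and \cite[Proposition 3.11]{HPW2} for the splitting $g=dt^2+\varphi^2(t)\widetilde g_N$ and the form of $u$. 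You never use the norm identity: you read the two-constant-eigenvalue structure pointwise off the components $T_{ijk}\nabla^k u$ and $T_{anb}$ (in effect reproving Proposition \ref{prop1-p}), build the warped product by hand from umbilicity of the level sets plus transnormality of $u$, and recover the same dichotomy only afterwards from $2hh'(\phi-\alpha)=0$ — your branches $\phi=\alpha$ (Einstein) and $h'\equiv 0$ (product, $\alpha=0$, $R=(n-1)\lambda$) correspond exactly to the paper's two admissible values of $R$. This route is more self-contained, at the price of relocating into your final ``elementary integration'' all the work the paper outsources to \cite{HPW,HPW2,costa-ribeiro-zhou}: extending the warped product real-analytically across $Crit(u)$, deciding when the fibre must be a round sphere rather than merely Einstein or Ricci-flat, and sorting the $\lambda<0$ product case by the sign of $\mu$. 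Two small corrections for the write-up: $Ric(\nabla u)\parallel\nabla u$ with eigenvalue $\frac{(n-1)\lambda-R}{m-1}$ already follows from constant scalar curvature alone via \eqref{eqK1a}, so it does not require $T=0$ (and you need it \emph{before} evaluating $T_{anb}$, since that computation uses $R_{nl}\nabla_l u=\alpha|\nabla u|$); and it is the zeros of the warping function $h$ (critical points of $u$), not the zeros of $u$, that close off into a smooth pole — the zeros of $u$ always lie on $\partial M$.
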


We note that Example \ref{exA} does not satisfy the $T$-flat condition. As an application of Theorem \ref{ThmB}, we obtain the following classification result for noncompact $3$-dimensional $m$-quasi-Einstein manifolds, which includes the case of non-empty boundary.

\begin{theorem}
    \label{ThmA}
    Let $(M^3,\,g,\,u,\,\lambda)$ be a noncompact nontrivial $3$-dimensional simply connected (possibly with boundary) $m$-quasi-Einstein manifolds with $m>1.$ Then $M^3$ has constant scalar curvature if and only if it is isometric to either 
    \begin{itemize}
        \item[(i)] the standard hyperbolic space $\mathbb{H}^3,$ or
        \item[(ii)] $[0,\infty)\times \mathbb{H}^2$ with the warped product metric $g=dt^2+\cosh^{2}(t)g_{\mathbb{H}^2},$ or
        \item[(iii)] $[0,\infty)\times \mathbb{H}^2$ with the product metric, or
        \item[(iv)] $\mathbb{R}\times \mathbb{R}^2$ with the warped product metric $g=dt^2+e^{2\sqrt{-k}t}g_{\mathbb{R}^2}$, or
        \item[(v)] $\mathbb{R}\times \mathbb{H}^2$ with the product metric $g=dt^2+\frac{1}{(-\lambda)}g_{\mathbb{H}^{2}}$, or
        \item[(vi)] $[0,\infty)\times \mathbb{R}^2$ with the product metric $g=dt^2+g_{\mathbb{R}^2}.$
    \end{itemize} 
\end{theorem}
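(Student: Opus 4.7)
The approach is to reduce the theorem to Theorem~\ref{ThmB} by establishing that, in dimension three, constant scalar curvature forces the $T$-tensor to vanish identically. Once this reduction is accomplished, the six listed cases are obtained by intersecting the nine-item classification of Theorem~\ref{ThmB} with the noncompact three-dimensional setting. The converse direction (that each of the six listed manifolds is a nontrivial $m$-quasi-Einstein manifold with constant scalar curvature) is a direct verification.

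\textbf{Step 1 (Reduction to $T$-flatness).} Since the Weyl tensor vanishes identically in dimension three, identity (\ref{eq31}) reduces to
\begin{equation*}
u\,C_{ijk}\;=\;T_{ijk}\qquad\text{on }M^{3}.
\end{equation*}
Because $\mathrm{int}(M)$ is dense in $M$ and $T_{ijk}$ is continuous, proving $T\equiv 0$ on $M$ amounts to showing that the Cotton tensor vanishes on the interior, where $u>0$.

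\textbf{Step 2 (Vanishing of $T$).} This is the crucial technical step. The constant scalar curvature hypothesis eliminates the last term of (\ref{tensorTT}). Using the quasi-Einstein equation (\ref{eq-qE}), the second Bianchi identity, and the fact that in dimension three the Cotton tensor is divergence-free in its last index, one computes $\nabla^{k}T_{ijk}$ explicitly in terms of $C_{ijk}$, $\nabla u$ and Ricci. Contracting the resulting expression with $T_{ijk}$, substituting $u\,C_{ijk}=T_{ijk}$, and integrating by parts against a suitable power of $u$, the boundary term at $\partial M=\{u=0\}$ vanishes. At the noncompact ends the growth of $u$ is controlled by the identity (\ref{eqmu}), so the integration can be closed and yields an integral identity forcing $|T|^{2}\equiv 0$.

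\textbf{Step 3 (Application of Theorem~\ref{ThmB} and enumeration).} With $T\equiv 0$ established, the hypotheses of Theorem~\ref{ThmB} are satisfied, so $(M^{3},g,u,\lambda)$ must be isometric to one of the nine listed models. Noncompactness forces $\lambda\leq 0$, eliminating cases (a)(i)--(ii). In dimension three a simply connected Ricci-flat $2$-manifold is $\mathbb{R}^{2}$, and a simply connected $2$-dimensional Einstein manifold with negative Einstein constant is $\mathbb{H}^{2}$. Matching each remaining case: item (iv) gives case (i); item (v) gives case (ii) (after the standard rescaling); item (ix) gives case (iii); item (vi) gives case (iv); items (vii) and (viii) both realize the product metric of case (v); and item (iii) gives case (vi). This matches exactly the six items in the statement.

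\textbf{Main obstacle.} The principal difficulty lies in Step~2: producing a clean divergence identity for $T_{ijk}$ in dimension three and then justifying the integration by parts in the noncompact (possibly with boundary) setting, using the growth control on $u$ from (\ref{eqmu}) to tame the ends. Once $T\equiv 0$ is in hand, the remainder of the proof is a routine enumeration.
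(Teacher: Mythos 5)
Your Step 2 is a genuine gap, and it is precisely where the paper does all the work. You assert that contracting a divergence identity for $T_{ijk}$ with $T_{ijk}$ and ``integrating by parts against a suitable power of $u$'' yields $|T|^2\equiv 0$, but you neither produce the identity nor establish that the resulting quadratic form has a sign, and on a noncompact manifold with $\lambda<0$ the potential grows like $\cosh(\sqrt{-\alpha}\,t)$ or $e^{\sqrt{-\alpha}\,t}$ (see (\ref{transnormal})), so weighted integrals of the form $\int_M |T|^2 u^a$ are not obviously finite and the cutoff/limit argument needed to ``close'' the integration at the ends is exactly the kind of step that cannot be waved away. The paper avoids integration entirely: it works pointwise and algebraically with the tensor $P=Ric-\rho g$ of (\ref{ppa}). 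Since $R$ is constant, (\ref{l1}) gives $P(\nabla u)=0$, so in dimension three $P$ has eigenvalues $0,\mu_2,\mu_3$; the identity (\ref{l2}) yields $|P|^2=(\lambda-\rho)\,\mathrm{Tr}(P)$, whence $2\mu_2\mu_3=[2\lambda-(m+2)\rho][\lambda-(m+1)\rho]$ is constant. If $\mu_2\mu_3=0$, analyticity forces one eigenvalue to vanish identically, all eigenvalues are constant, the Cotton tensor vanishes, hence $T\equiv 0$ and Proposition \ref{prop1-p} forces $P=0$ (Einstein). If $\mu_2\mu_3\neq 0$, the quadratic (\ref{poly}) has discriminant $4m\rho\,(2\lambda-(m+2)\rho)^2$, which combined with $\rho\le 0$ forces $\rho=0$, i.e.\ $R=2\lambda=(n-1)\lambda$, and then Proposition \ref{propT0} (a \emph{pointwise} identity expressing $|T|^2$ as a multiple of $(R-(n-1)\lambda)(n(n-1)\lambda-(m+n-1)R)|\nabla u|^2$) gives $T\equiv 0$ with no integration whatsoever.

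Note also that your reduction is slightly misstated as a claim that constant scalar curvature in dimension three directly forces $T\equiv 0$; what is actually true, and what must be proved, is that the scalar curvature is forced into one of the two special values $\frac{n(n-1)}{m+n-1}\lambda$ or $(n-1)\lambda$ of Proposition \ref{propT0} (or $\lambda=0$, handled by Proposition \ref{propL0}). Your Step 3 enumeration from Theorem \ref{ThmB} is essentially correct, but without a valid Step 2 the proof does not stand.
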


Next, as a direct consequence of Theorem \ref{ThmA} and \cite[Theorem 3]{costa-ribeiro-zhou}, we get the following corollary.

\begin{corollary}
Let $(M^3,\,g,\,u,\,\lambda)$ be a nontrivial $3$-dimensional simply connected (possibly with boundary) $m$-quasi-Einstein manifolds with $m>1.$ Then $M^3$ has constant scalar curvature if and only if it is isometric to either 

    \begin{itemize}
         \item[(i)] the standard hemisphere $\mathbb{S}^{3}_{+}$, or
        \item[(ii)] the cylinder $I\times\mathbb{S}^2$ with the product metric, or
        \item[(iii)] the standard hyperbolic space $\mathbb{H}^3,$ or
        \item[(iv)] $[0,\infty)\times \mathbb{H}^2$ with the warped product metric $g=dt^2+\cosh^{2}(t)g_{\mathbb{H}^2},$ or
        \item[(v)] $[0,\infty)\times \mathbb{H}^2$ with the product metric, or
        \item[(vi)] $\mathbb{R}\times \mathbb{R}^2$ with the warped product metric $g=dt^2+e^{2\sqrt{-k}t}g_{\mathbb{R}^2}$, or
        \item[(vii)] $\mathbb{R}\times \mathbb{H}^2$ with the product metric $g=dt^2+\frac{1}{(-\lambda)}g_{\mathbb{H}^{2}}$, or
        \item[(viii)] $[0,\infty)\times \mathbb{R}^2$ with the product metric $g=dt^2+g_{\mathbb{R}^2}.$
    \end{itemize} 
\end{corollary}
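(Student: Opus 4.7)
The plan is to combine the two classification results that are already invoked in the statement: Theorem \ref{ThmA} handles the noncompact situation and \cite[Theorem 3]{costa-ribeiro-zhou} handles the compact situation. The only real work is setting up an exhaustive case split according to whether $M^3$ is compact or noncompact; everything else is an appeal to one of these two theorems.

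In the noncompact case, the results of Qian \cite{Qian}, Kim-Kim \cite{kk} and He-Petersen-Wylie \cite{HPW2} recalled in the introduction force $\lambda\leq 0$, so Theorem \ref{ThmA} applies verbatim and delivers exactly items (iii)--(viii) of the corollary.

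In the compact case, I would first rule out the subcase $\partial M=\emptyset$: by the theorem of Case-Shu-Wei \cite{CaseShuWei}, any compact quasi-Einstein manifold without boundary and with constant scalar curvature is trivial, contradicting the standing nontriviality hypothesis. Hence $\partial M\neq\emptyset$, and in this situation \cite[Theorem 4.1]{HPW2} gives $\lambda>0$, so that \cite[Theorem 3]{costa-ribeiro-zhou} — which classifies simply connected $3$-dimensional compact $m$-quasi-Einstein manifolds with nonempty boundary and constant scalar curvature — produces precisely the hemisphere $\mathbb{S}^3_+$ or the cylinder $I\times \mathbb{S}^2$ with product metric, matching items (i) and (ii).

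For the converse direction, it suffices to verify that each of the eight listed models has constant scalar curvature. This is immediate: the product examples inherit constancy of scalar curvature from their Einstein factors, and the warped product examples are exactly those analyzed in \cite{HPW2}, where the warping profile was chosen precisely so that the scalar curvature is independent of $t$. I do not anticipate a genuine obstacle here; the only point requiring care is making sure the boundaryless compact case is explicitly excluded, so that the dichotomy \emph{compact with boundary} versus \emph{noncompact} is exhaustive and the two cited theorems together cover every possibility.
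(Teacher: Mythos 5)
Your proposal is correct and is essentially the argument the paper intends: the corollary is stated there as a direct consequence of Theorem \ref{ThmA} (noncompact case) and \cite[Theorem 3]{costa-ribeiro-zhou} (compact with boundary case), with the compact boundaryless case excluded by Case--Shu--Wei exactly as you do. Your case split is exhaustive and the converse verification is routine, so there is nothing to add.
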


\medskip

The remainder of this paper is organized as follows. In Section \ref{Sec2}, we review some basic facts about $m$-quasi-Einstein manifolds, present several foundational lemmas and discuss Example \ref{exA}. In Section \ref{sec3}, we establish key results and derive novel formulas that will be used in the proofs of Theorems \ref{ThmB} and \ref{ThmA}. Section \ref{Sec4} contains the proofs of Theorems \ref{ThmB} and \ref{ThmA}.

\medskip

{\bf Acknowledgement.} The authors would like to thank Detang Zhou and Rafael Di\'ogenes for enlightening conversations and helpful suggestions.

\section{Background and Examples}
\label{Sec2}

In this section, we review some basic facts and present several lemmas that will be useful in the proofs of the main results. Additionally, we introduce new examples of noncompact quasi-Einstein manifolds.

We start by recalling that an $m$-quasi-Einstein manifold is a Riemannian manifold $(M^n,\,g)$, $n\geq 2$, endowed with a smooth function $u:M\to \Bbb{R}$ satisfying

    \begin{eqnarray}
    \label{eq-fundamental}
        \nabla^2 u=\frac{u}{m}\left(Ric-\lambda g\right).
    \end{eqnarray} In particular, taking the trace of (\ref{eq-fundamental}), we obtain 
    
    \begin{equation}
    \label{eqtracefund}
    \Delta u=\frac{u}{m}\left(R -\lambda n\right).
    \end{equation} 
Regarding the regularity of the potential function, it is known that, for an $m$-quasi-Einstein manifold $(M^n, g, u, \lambda)$, both $u$ and $g$ are real analytic in harmonic coordinates (cf. Proposition 2.4 in \cite{HPW}).

We also recall some important properties of $m$-quasi-Einstein manifolds (cf. \cite{CaseShuWei,compact,kk}).

\begin{proposition}
\label{propA}
	Let $(M^n,\,g,\,u,\,\lambda),$ $n\geq 3,$ be an $n$-dimensional $m$-quasi-Einstein manifold. Then we have:
	\begin{itemize}
	\item[]
\begin{equation}
\label{eqK1a}
	\frac{1}{2}u\nabla R=-(m-1)Ric(\nabla u)-(R-(n-1)\lambda)\nabla u;
\end{equation}

	\item[]
	\begin{eqnarray}
	\label{laplacianoR}
		\frac{1}{2}\Delta R+\frac{m+2}{2u}\langle \nabla u,\nabla R\rangle&=& -\frac{m-1}{m}\left|Ric-\frac{R}{n}g\right|^2\nonumber\\
		&&-\frac{(n+m-1)}{nm}(R-n\lambda)\left(R-\frac{n(n-1)}{n+m-1}\lambda\right).
	\end{eqnarray}
	\end{itemize}
\end{proposition}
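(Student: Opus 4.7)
The plan is to derive both identities by successive divergences of the fundamental Hessian equation \eqref{eq-fundamental}, combined with the Bochner formula for gradients of functions and the contracted second Bianchi identity. For \eqref{eqK1a}, I would apply $\nabla^i$ to $\nabla_i \nabla_j u = \frac{u}{m}(R_{ij} - \lambda g_{ij})$. Since the Hessian is symmetric, the left-hand side becomes $\Delta(\nabla_j u)$, which by Bochner's identity equals $\nabla_j \Delta u + R_{jl}\nabla^l u$. The right-hand side, after the product rule and the contracted Bianchi identity $\nabla^i R_{ij} = \frac{1}{2}\nabla_j R$, equals $\frac{1}{m}(R_{ij}\nabla^i u - \lambda \nabla_j u) + \frac{u}{2m}\nabla_j R$. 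Substituting $\nabla_j \Delta u$ from \eqref{eqtracefund} as $\frac{\nabla_j u}{m}(R - n\lambda) + \frac{u}{m}\nabla_j R$, rearranging, and clearing the factor $m$ then yields \eqref{eqK1a}.

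For \eqref{laplacianoR}, I would apply $\nabla^j$ once more to \eqref{eqK1a}. The left-hand side becomes $\frac{1}{2}\langle \nabla u, \nabla R\rangle + \frac{u}{2}\Delta R$. On the right-hand side, I would again invoke the contracted Bianchi identity to handle $\nabla^j R_{jl}$, replace $R_{jl}\nabla^j\nabla^l u$ via the Hessian equation by $\frac{u}{m}(|Ric|^2 - \lambda R)$, and substitute $\Delta u$ using \eqref{eqtracefund}. Collecting the two $\langle \nabla u, \nabla R\rangle$ contributions (one of coefficient $\frac{1}{2}$ from the left and a further $\frac{m+1}{2}$ arising from the combined right side), moving them to the left, and dividing through by $u$ on the interior then produces exactly the coefficient $\frac{m+2}{2u}$ on the left of \eqref{laplacianoR}, while the right becomes $-\frac{m-1}{m}|Ric|^2 + \frac{(m-1)\lambda R}{m} - \frac{1}{m}(R-(n-1)\lambda)(R-n\lambda)$.

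The final step is purely algebraic. Using the decomposition $|Ric - \frac{R}{n}g|^2 = |Ric|^2 - \frac{R^2}{n}$ to isolate the traceless Ricci norm, the task reduces to a polynomial identity in $R$ and $\lambda$ with coefficients depending on $m$ and $n$. The factorization appearing in the statement, with factors $(R - n\lambda)$ and $\bigl(R - \frac{n(n-1)}{n+m-1}\lambda\bigr)$, is chosen precisely so that the quadratic polynomials on both sides match; expanding and comparing the coefficients of $R^2$, $\lambda R$, and $\lambda^2$ completes the verification. The main obstacle is not conceptual but purely bookkeeping: one must track carefully how the $|Ric|^2$ term, the $\lambda R$ cross term, and the quadratic terms in $\lambda$ combine to produce exactly the quoted decomposition, which is arranged so as to factor out $(R - n\lambda)$, a geometrically natural quantity equal to $m\Delta u/u$ by \eqref{eqtracefund}.
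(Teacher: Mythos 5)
Your proposal is correct: the divergence of the Hessian equation combined with the commutation identity $\Delta\nabla_j u=\nabla_j\Delta u+R_{jl}\nabla^l u$, the contracted Bianchi identity, and the trace equation \eqref{eqtracefund} yields \eqref{eqK1a} exactly, and a second divergence with the substitution $R_{jl}\nabla^j\nabla^l u=\frac{u}{m}(|Ric|^2-\lambda R)$ gives the coefficient $\frac{m+2}{2u}$ and a right-hand side that I have checked does expand to the quoted factorization. The paper itself gives no proof of Proposition \ref{propA} (it is recalled from the cited references), and your argument is precisely the standard derivation found there, so nothing further is needed.
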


It follows from (\ref{eqK1a}) that an $m$-quasi-Einstein manifold $M^n$ with constant scalar curvature and $m>1$ must satisfy 
\begin{eqnarray}\label{eq-ridu}
    Ric(\nabla u)=\frac{(n-1)\lambda- R}{m-1}\nabla u.
\end{eqnarray} Consequently, 
\begin{eqnarray}\label{ritdu}
    \mathring{Ric}(\nabla u)=\frac{n(n-1)\lambda-(m+n-1)R}{n(m-1)}\nabla u,
\end{eqnarray} where $\mathring{Ric}$ denotes the traceless Ricci tensor. Moreover, by using (\ref{laplacianoR}), one deduces that an $m$-quasi-Einstein manifold $M^n$ with constant scalar curvature and $m>1$ also satisfies 

\begin{eqnarray}
\label{normricdu}
|\mathring{Ric}|^2 = \frac{1}{n(m-1)}\Big(R-n\lambda\Big)\Big(n(n-1)\lambda -(m+n-1)R\Big).
\end{eqnarray}

From (\ref{normricdu}), we obtain the following result (cf. \cite{CaseShuWei} and \cite{HPW2}). 

\begin{proposition}
    \label{propL0}
    Let $(M^n,\,g,\,u),$ $n\geq 3,$ be a nontrivial $n$-dimensional $m$-quasi-Einstein manifold with $\lambda=0$ and $m>1.$ If $M^n$ has constant scalar curvature, then $M^n$ is isometric to $[0,\infty)\times F$ with the metric $g=dt^2 +g_{F},$ where $F$ is Ricci flat.  
\end{proposition}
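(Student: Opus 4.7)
The plan is to exploit formula (\ref{normricdu}) with $\lambda=0$ to show that the manifold is Ricci flat, reduce the quasi-Einstein equation to $\nabla^{2}u=0$, and then use the resulting parallel gradient together with completeness and the boundary condition to build the splitting.

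First I would substitute $\lambda=0$ into (\ref{normricdu}) to obtain
\begin{equation*}
|\mathring{Ric}|^{2} \;=\; -\frac{m+n-1}{n(m-1)}\,R^{2}.
\end{equation*}
Since $m>1$ and $m+n-1>0$, the right-hand side is non-positive while the left-hand side is non-negative, so both must vanish. This forces $R=0$ and $\mathring{Ric}=0$, hence $\mathrm{Ric}\equiv 0$ on $M^n$. Substituting this back into the fundamental equation (\ref{eq-fundamental}) (with $\lambda=0$) yields $\nabla^{2}u\equiv 0$ on $M^n$. In particular $|\nabla u|$ is constant, and nontriviality forces $|\nabla u|=c>0$.

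Next I would argue that $\partial M\neq\emptyset$. Since $\nabla u$ is a nonvanishing parallel vector field, its integral curves are complete geodesics along which $u$ varies affinely, namely $u(\gamma(s))=u(\gamma(0))+c^{2}s$. If $\partial M$ were empty, then $M^n$ would be complete without boundary, and following the integral curve in the direction of $-\nabla u$ would make $u$ negative in finite time, contradicting $u>0$ on $M$. Hence $\partial M\neq\emptyset$, and the function $u$ vanishes precisely on $\partial M$.

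Then I would construct the splitting. Let $\nu=\nabla u/c$ on $\mathrm{int}(M)$. Because $\nabla\nu=0$, the flow $\Phi_{t}$ of $\nu$ consists of local isometries and each integral curve is a geodesic; the level sets $\Sigma_{t}=\{u=ct\}$ are totally geodesic hypersurfaces, all isometric to $\Sigma_{0}=\partial M$. Since $M$ is complete and $\nabla u$ is parallel with constant norm $c$, standard de Rham-type reasoning gives a global isometry
\begin{equation*}
\Phi:[0,\infty)\times\partial M\longrightarrow M,\qquad (t,x)\mapsto\Phi_{t}(x),
\end{equation*}
under which the metric pulls back to $dt^{2}+g_{\partial M}$ and $u$ pulls back to $ct$. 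Finally, since $M^n$ is Ricci flat and the slices are totally geodesic, the Gauss equation gives $\mathrm{Ric}_{\partial M}=0$, so $F:=\partial M$ is Ricci flat, completing the proof.

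The main obstacle is the passage from the local information $\nabla^{2}u=0$ to the global product structure: one must rule out topological obstructions at the boundary (e.g. multiple boundary components) and verify that the flow of $\nu$ is defined on all of $[0,\infty)$, which is where completeness of $(M^n,g)$ together with the constancy of $|\nabla u|$ becomes essential. Once the isometry $\Phi$ is in hand, the remaining assertions (form of $u$ and Ricci flatness of $F$) follow immediately.
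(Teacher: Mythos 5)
Your proof is correct, and its first half coincides exactly with the paper's: setting $\lambda=0$ in \eqref{normricdu} forces $R=0$ and $\mathring{Ric}=0$, hence $Ric\equiv 0$. Where you diverge is in the second half: the paper simply invokes \cite[Proposition 2.4]{HPW2} to pass from Ricci flatness to the isometry with $[0,\infty)\times F$, whereas you reprove that step from scratch, observing that $Ric=0$ and $\lambda=0$ reduce \eqref{eq-fundamental} to $\nabla^{2}u=0$, so that $\nabla u$ is a nonvanishing parallel field, ruling out $\partial M=\emptyset$ by the positivity of $u$, and then building the product structure by flowing along $\nu=\nabla u/|\nabla u|$. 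Your self-contained argument is sound (the global splitting does not even require simple connectedness here, since $t=u/c$ serves as a globally defined coordinate, and surjectivity of the flow map follows because every point reaches $\{u=0\}=\partial M$ in finite time along $-\nu$); the one small point worth making explicit is that the Ricci flatness of the slices uses not just the Gauss equation for a totally geodesic hypersurface but also $R(\cdot,\nu,\cdot,\nu)=0$, which holds because $\nu$ is parallel (equivalently, it is immediate once the metric is written as the product $dt^{2}+g_{F}$). In short, your route is more elementary and self-contained, at the cost of redoing work that the paper outsources to He--Petersen--Wylie.
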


\begin{proof}
  Since $M^n$ has constant scalar curvature and $\lambda=0,$ Eq. (\ref{normricdu}) yields

    \begin{equation*}
        (m-1)\left|Ric-\frac{R}{n}g\right|^2 =-\frac{(m+n-1)}{n}R^2,
    \end{equation*} which implies that $M^n$ is Ricci flat. The conclusion then follows from \cite[Proposition 2.4]{HPW2}, which asserts that $M^n$ is isometric to $[0,\infty)\times F$ with the metric $g=dt^2 +g_{F},$ where $F$ is Ricci flat. This completes the proof.
\end{proof}

We now recall some important tensors that arise in the study of curvature on a Riemannian manifold $(M^n,\,g)$ of dimension $n\ge 3.$  The first one is the {\it Weyl tensor} $W$ which is defined by the following decomposition formula:
\begin{eqnarray}
\label{weyl}
R_{ijkl}&=&W_{ijkl}+\frac{1}{n-2}\big(R_{ik}g_{jl}+R_{jl}g_{ik}-R_{il}g_{jk}-R_{jk}g_{il}\big) \nonumber\\
 &&-\frac{R}{(n-1)(n-2)}\big(g_{jl}g_{ik}-g_{il}g_{jk}\big),
\end{eqnarray} where $R_{ijkl}$ stands for the Riemann curvature tensor. It is well known that $W=0$ in dimension $n=3.$ The second one is the {\it Cotton tensor} $C$ given by
\begin{equation}
\label{cotton} \displaystyle{C_{ijk}=\nabla_{i}R_{jk}-\nabla_{j}R_{ik}-\frac{1}{2(n-1)}\big(\nabla_{i}R
g_{jk}-\nabla_{j}R g_{ik}).}
\end{equation} Notice that $C_{ijk}$ is skew-symmetric in the first two indices and trace-free in any two indices. Moreover, for $n\geq 4,$ we have 

\begin{equation}
C_{ijk}=-\frac{(n-2)}{(n-3)}\nabla_{l}W_{ijkl}.
\end{equation} While the {\it Schouten tensor} $A$ is defined by
\begin{equation}
\label{schouten} A_{ij}=R_{ij}-\frac{R}{2(n-1)}g_{ij}.
\end{equation} Combining Eqs. (\ref{weyl}) and (\ref{schouten}), one obtains that 
\begin{equation}
\label{WS} R_{ijkl}=\frac{1}{n-2}(A\odot g)_{ijkl}+W_{ijkl},
\end{equation} where $\odot$ is the Kulkarni-Nomizu product, which takes two symmetric $(0,2)$-tensors and yields a $(0,4)$-tensor with the same algebraic symmetries as the curvature tensor, given by
\begin{eqnarray}
\label{eq76}
(\alpha \odot \beta)_{ijkl}=\alpha_{ik}\beta_{jl}+\alpha_{jl}\beta_{ik}-\alpha_{il}\beta_{jk}-\alpha_{jk}\beta_{il}.
\end{eqnarray} 

As previously mentioned, the Cotton tensor can be expressed in terms of the Weyl tensor and the $3$-tensor $T_{ijk}$ as follows (see \cite[Lemma 2]{compact}).

\begin{lemma}[\cite{compact}]
	\label{lem1}
	Let $(M^n,\,g,\,u,\,\lambda)$ be an $m$-quasi-Einstein manifold. Then it holds
	\begin{equation}\label{eq3}
		uC_{ijk}=mW_{ijkl}\nabla_l u+T_{ijk},
	\end{equation} where the $3$-tensor $T_{ijk}$ is given by
	\begin{eqnarray}
	\label{as}
		T_{ijk}&=&\frac{m+n-2}{n-2}(R_{ik}\nabla_j u-R_{jk}\nabla_i u)+\frac{m}{n-2}(R_{jl}\nabla_l ug_{ik}-R_{il}\nabla_l ug_{jk})\nonumber\\
		& &+\frac{(n-1)(n-2)\lambda+m R}{(n-1)(n-2)}(\nabla_i ug_{jk}-\nabla_j ug_{ik})\\&&-\frac{u}{2(n-1)}(\nabla_i R g_{jk}-\nabla_j R g_{ik}).\nonumber
	\end{eqnarray}
\end{lemma}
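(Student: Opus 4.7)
The plan is to derive the identity by commuting covariant derivatives of the Hessian of $u$ and then repackaging the resulting curvature terms in the form of the Cotton and Weyl tensors.

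First, I would differentiate the fundamental equation $\nabla_j\nabla_k u = \frac{u}{m}(R_{jk} - \lambda g_{jk})$ in the direction $\partial_i$, obtaining
\begin{equation*}
\nabla_i\nabla_j\nabla_k u \;=\; \frac{\nabla_i u}{m}(R_{jk}-\lambda g_{jk}) + \frac{u}{m}\nabla_i R_{jk}.
\end{equation*}
Swapping $i$ and $j$ and subtracting, the Ricci identity $\nabla_i\nabla_j\nabla_k u - \nabla_j\nabla_i\nabla_k u = R_{ijkl}\nabla_l u$ yields
\begin{equation*}
\tfrac{u}{m}\bigl(\nabla_i R_{jk}-\nabla_j R_{ik}\bigr) + \tfrac{1}{m}\bigl(R_{jk}\nabla_i u-R_{ik}\nabla_j u\bigr) - \tfrac{\lambda}{m}\bigl(g_{jk}\nabla_i u-g_{ik}\nabla_j u\bigr) = R_{ijkl}\nabla_l u.
\end{equation*}

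Next, I would invoke the definition \eqref{cotton} of the Cotton tensor to replace $\nabla_i R_{jk}-\nabla_j R_{ik}$ by $C_{ijk}+\frac{1}{2(n-1)}(\nabla_i R\, g_{jk}-\nabla_j R\, g_{ik})$, and on the right-hand side expand $R_{ijkl}\nabla_l u$ using the Weyl decomposition \eqref{weyl}. That substitution produces $W_{ijkl}\nabla_l u$ plus Ricci terms of the form $R_{ik}\nabla_j u - R_{jk}\nabla_i u$, divergence-like terms $R_{il}\nabla_l u\, g_{jk}-R_{jl}\nabla_l u\, g_{ik}$, and a scalar-curvature term with coefficient $\frac{R}{(n-1)(n-2)}$.

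Finally, I would multiply the resulting equation through by $m$, isolate $uC_{ijk}$ on one side, and group the remaining terms. The $\nabla_i u\, g_{jk}-\nabla_j u\, g_{ik}$ coefficients coming from the Ricci-identity step and from the Weyl expansion combine into the coefficient $\frac{(n-1)(n-2)\lambda + mR}{(n-1)(n-2)}$, the terms $R_{ik}\nabla_j u - R_{jk}\nabla_i u$ collect with coefficient $\frac{m+n-2}{n-2}$ (up to an overall sign after rearranging), and the scalar-derivative terms pick up the factor $-\frac{u}{2(n-1)}$. The result is exactly $T_{ijk}$ as in \eqref{as}, which gives \eqref{eq3}.

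The main obstacle is purely bookkeeping: carefully matching the sign conventions and coefficients when merging the Ricci-tensor contributions from the commutator identity with those emerging from the Weyl decomposition, so that the final coefficient in front of $(R_{ik}\nabla_j u - R_{jk}\nabla_i u)$ and the trace-type terms land as $\frac{m+n-2}{n-2}$ and $\frac{m}{n-2}$ respectively. No further geometric input is needed beyond the Ricci identity and the algebraic identities \eqref{weyl}, \eqref{cotton}.
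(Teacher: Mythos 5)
Your proposal is correct and follows the standard derivation: the paper itself only cites \cite{compact} for this lemma, but the commutator identity you obtain in your first step is precisely the identity from Lemma 1 of \cite{compact} that the paper quotes in the proof of its Lemma \ref{lemma-aux}, and from there your substitution of \eqref{cotton} and \eqref{weyl} produces exactly the coefficients $\frac{m+n-2}{n-2}$, $\frac{m}{n-2}$ and $\frac{(n-1)(n-2)\lambda+mR}{(n-1)(n-2)}$ as claimed. This is essentially the same argument as in the cited source, so nothing further is needed.
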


Substituting (\ref{eqK1a}) into (\ref{as}), it is not difficult to check that

\begin{equation}
\label{eqDTa}
T_{ijk}=\left(\frac{m+n-2}{m}\right)u D_{ijk},
\end{equation} where $D_{ijk}$ is the $3$-tensor defined in \cite{CH} as 

\begin{eqnarray}
D_{ijk}&=&\frac{1}{n-2}\left(R_{jk}\nabla_{i}f-R_{ik}\nabla_{j}f\right)\nonumber\\&& + \frac{1}{(n-1)(n-2)}\left(R_{il}\nabla_{l}f g_{jk}-R_{jl}\nabla_{l}f g_{ik}\right)\nonumber\\&&-\frac{R}{(n-1)(n-2)}\left(g_{jk}\nabla_{i}f - g_{ik}\nabla_{j}f\right).
\end{eqnarray}

In order to proceed, for $m>1$, we recall the auxiliary tensor $P$ defined by
\begin{equation}
\label{ppa}
    P=Ric-\left(\frac{(n-1)\lambda-R}{m-1}\right) g.
\end{equation} In particular, following \cite{HPW}, we consider the $4$-tensor $Q$ associated with $P$ as  
\begin{eqnarray}
\label{tensorQ}
    Q=Rm+\frac{1}{m}P\odot g+\frac{(n-m)\lambda-R}{2m(m-1)}g\odot g,
\end{eqnarray} where $\odot$ stands for the Kulkarni-Nomizu product, as defined in $\eqref{eq76}$, and $Rm$ is the Riemann curvature tensor.

The next lemma provides several useful formulas for quasi-Einstein manifolds in terms of the auxiliary tensor $P$ (see also \cite[Lemma 3.2]{CaseShuWei} and \cite[Proposition 3.4]{HPW2}). 

\begin{lemma}\label{lemma-aux}
    Let $(M^n,g,u,\lambda)$ be an $m$-quasi-Einstein manifold with $m>1.$ Then we have:
    \begin{align}\label{l1}
        P(\nabla u)&=-\frac{1}{2(m-1)}u\nabla R;\\\label{l2}
        \frac{u}{2} L_{m+2}(R)&=\frac{m-1}{m}Tr\left(P\circ((\lambda-\rho)I-P)\right);\\\label{l23}
        u(\nabla_i P_{jk}-\nabla_j P_{ik})&=m Q_{ijkl}\nabla_l u+\frac{1}{2}(g\odot g)_{ijkl}P_{sl}\nabla_l u,
    \end{align}
    where $\rho=\frac{(n-1)\lambda-R}{m-1}$ and $uL_{m+2}(f)=u\Delta f+(m+2)\langle \nabla u,\nabla f \rangle$. In particular, if the scalar curvature is constant, the following additional identities hold:

    \begin{align}\label{l4}
        Q_{ijkl}\nabla_l u&=\frac{u}{m}(\nabla_i R_{jk}-\nabla_j R_{ik});\\\label{l5}
        \frac{u}{m}\nabla_l P_{ij}\nabla_l u&=\left(\frac{u}{m}\right)^2\left(-\left(\lambda-\rho\right)P_{ij}+P_{ik}P_{kj}\right)+Q_{kijl}\nabla_k u\nabla_l u.
    \end{align}
\end{lemma}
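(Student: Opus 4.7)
The five identities all descend from the quasi-Einstein equation $\nabla_j \nabla_k u = \frac{u}{m}(R_{jk}-\lambda g_{jk})$, together with the auxiliary data $P_{jk} = R_{jk}-\rho g_{jk}$ and $\lambda - \rho = \frac{R - (n-1)\lambda + (m-1)\lambda}{m-1}=\frac{R + (m-n)\lambda}{m-1}$, so my plan is to peel them off one by one and leave the curvature-commutator computations for the middle step.

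First, \eqref{l1} is immediate from Proposition \ref{propA}: writing \eqref{eqK1a} as $\mathrm{Ric}(\nabla u) = \rho \nabla u - \frac{u}{2(m-1)}\nabla R$ and subtracting $\rho\nabla u$ from both sides gives $P(\nabla u) = -\frac{u}{2(m-1)}\nabla R$. Next, \eqref{l2} is an algebraic rewriting of \eqref{laplacianoR}: expand $\mathrm{Tr}(P\circ((\lambda-\rho)I - P)) = (\lambda-\rho)(R-n\rho) - |\mathrm{Ric}|^2 + 2\rho R - n\rho^2$ and observe, after substituting $\rho=\frac{(n-1)\lambda - R}{m-1}$, that this agrees with the right-hand side of \eqref{laplacianoR} multiplied by $\frac{u^2}{m(m-1)}$; the simplification is linear in $R$, $\lambda$ and quadratic in $|\mathring{\mathrm{Ric}}|$, so only careful book-keeping is required.

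The heart of the lemma is \eqref{l23}. The plan is to differentiate $\nabla_j\nabla_k u = \frac{u}{m}(R_{jk}-\lambda g_{jk})$ with respect to $\nabla_i$, antisymmetrize in $(i,j)$, and apply the Ricci identity $\nabla_i\nabla_j\nabla_k u - \nabla_j\nabla_i\nabla_k u = -R_{ijkl}\nabla^l u$ on the left. This yields
\begin{equation*}
-R_{ijkl}\nabla^l u = \frac{1}{m}\bigl(\nabla_i u(R_{jk}-\lambda g_{jk}) - \nabla_j u(R_{ik}-\lambda g_{ik})\bigr) + \frac{u}{m}\bigl(\nabla_i R_{jk} - \nabla_j R_{ik}\bigr).
\end{equation*}
Replace $R_{jk}$ by $P_{jk}+\rho g_{jk}$ on the right, and replace $R_{ijkl}$ by its $Q$-expression using the definition \eqref{tensorQ}: $R_{ijkl} = Q_{ijkl} - \frac{1}{m}(P\odot g)_{ijkl} - \frac{(n-m)\lambda-R}{2m(m-1)}(g\odot g)_{ijkl}$. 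After contracting with $\nabla^l u$, the $P\odot g$ piece produces precisely the Ricci-part of $\nabla_i u\,R_{jk}-\nabla_j u\,R_{ik}$ on the left, the $g\odot g$ piece absorbs the $\lambda$ and $\rho$ terms, and, crucially, $u(\nabla_i R_{jk}-\nabla_j R_{ik}) = u(\nabla_i P_{jk}-\nabla_j P_{ik}) + $ (a $\nabla R$ remainder) which, using \eqref{l1}, reassembles into the $(g\odot g)_{ijkl}P_{sl}\nabla^l u$ term. Matching coefficients is the main obstacle — this is where the specific normalizations of $\rho$ and of $Q$ are dictated, and the bookkeeping of Kulkarni-Nomizu indices must be done with care.

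With \eqref{l23} in hand, \eqref{l4} is a corollary: when $R$ is constant, $\rho$ is constant, so $\nabla P = \nabla R$, and \eqref{l1} forces $P(\nabla u)=0$, killing the last term of \eqref{l23}. Finally for \eqref{l5} I would start from the same differentiated identity $\nabla_k\nabla_j\nabla_l u = \frac{\nabla_k u}{m}(P_{jl}-(\lambda-\rho)g_{jl}) + \frac{u}{m}\nabla_k P_{jl}$ (legitimate since $\rho$ is now constant), contract with $\nabla^k u$ instead of antisymmetrizing, and rewrite the left-hand side by commuting derivatives via Ricci: $\nabla^k u\,\nabla_k\nabla_j\nabla_l u = \nabla^k u\,\nabla_j\nabla_k\nabla_l u - R_{kjls}\nabla^k u\,\nabla^s u$. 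The first piece equals $\nabla_j\nabla_k u\,\nabla^k\nabla_l u$ modulo a gradient of $|\nabla u|^2$; using the Hessian formula once more turns it into $\bigl(\tfrac{u}{m}\bigr)^2(P_{jk}-(\lambda-\rho)g_{jk})(P_{kl}-(\lambda-\rho)g_{kl})$, and the leftover Riemann contraction $R_{kjls}\nabla^k u\nabla^s u$ is converted to $Q_{kjls}\nabla^k u\nabla^s u$ via \eqref{tensorQ}, with the residual $P\odot g$ and $g\odot g$ pieces canceling the linear-in-$P$ and constant terms generated above and leaving exactly $-(\lambda-\rho)P_{jl}+P_{jk}P_{kl}$. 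The main obstacle throughout is sign/index discipline; no new geometric input beyond Proposition \ref{propA} and the Ricci identity is needed.
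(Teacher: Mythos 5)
Your proposal is correct and follows essentially the same route as the paper: \eqref{l1} by combining \eqref{ppa} with \eqref{eqK1a}; \eqref{l2} by rewriting the Case--Shu--Wei identity (i.e.\ \eqref{laplacianoR}) in terms of $P$; \eqref{l23} from the antisymmetrized covariant derivative of the quasi-Einstein equation (which the paper imports as Lemma 1 of \cite{compact} rather than rederiving via the Ricci identity), converted to $Q$ through \eqref{tensorQ} with the $\nabla\rho$ remainder absorbed via \eqref{l1}; and \eqref{l4}--\eqref{l5} as consequences of \eqref{l23} when $R$ is constant, the paper obtaining \eqref{l5} by contracting \eqref{l4} with $\nabla u$ and using $\nabla_j(P_{ik}\nabla_i u)=0$, which is an equivalent reorganization of your computation. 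The one slip worth fixing is in your verification of \eqref{l2}: the expansion $\mathrm{Tr}(P\circ((\lambda-\rho)I-P))=(\lambda-\rho)(R-n\rho)-|\mathrm{Ric}|^2+2\rho R-n\rho^2$ is right, but the matching factor against the right-hand side of \eqref{laplacianoR} is $\tfrac{m}{m-1}$, not $\tfrac{u^2}{m(m-1)}$ --- both quantities are free of $u$, so no power of $u$ can enter the proportionality.
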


\begin{proof}
To prove (\ref{l1}), it suffices to combined \eqref{ppa} with \eqref{eqK1a}.

Next, since $uL_{m+2}(f)=u\Delta f+(m+2)\langle \nabla u,\nabla f \rangle$, for any smooth real-valued functions $f$ on $M^n$, we deduce that
\begin{eqnarray*}
    \frac{u}{2} L_{m+2}(R)=\frac{u}{2}\Delta R+\frac{m+2}{2}\langle \nabla u,\nabla R \rangle.
\end{eqnarray*}
By \cite[Lemma 3.2]{CaseShuWei}, this expression can be rewritten as

\begin{eqnarray*}
    \frac{u}{2} L_{m+2}(R)=\frac{m-1}{m}Tr(Ric\circ(\lambda I-Ric))-\frac{1}{m}(R-n\lambda)(R-(n-1)\lambda),
\end{eqnarray*} where $\circ$ denotes the matrix product, i.e.,  for any $2$-tensors $S$ and $T$, we set $S\circ T=(S_{ij}T_{jk})_{ik}.$ Using the definition of the tensor $P$ and the fact that $R = -(m - 1) \rho + (n - 1) \lambda,$ it follows that

\begin{eqnarray*}
    \frac{u}{2} L_{m+2}(R)&=&\frac{m-1}{m}Tr\left((P+\rho I)\circ(\lambda I-(P+\rho I))\right)\nonumber\\&&-\frac{m-1}{m}((m-1)\rho+\lambda))\rho\\
    &=&\frac{m-1}{m}Tr\left(P\circ((\lambda-\rho)I-P)\right)\\
    & &+\frac{m-1}{m}\left(n\lambda\rho-\rho Tr(P)-n\rho^2-((m-1)\rho+\lambda))\rho\right)\\
    &=&\frac{m-1}{m}Tr\left(P\circ((\lambda-\rho)I-P)\right)\\
    & &+\frac{m-1}{m}\Big(\rho[(n-1)\lambda-(m+n-1)\rho-Tr(P)]\Big)\\
    &=&\frac{m-1}{m}Tr\left(P\circ((\lambda-\rho)I-P)\right),
\end{eqnarray*} where we have used the relation $Tr(P)=-(m+n-1)\rho+(n-1)\lambda,$ which concludes the proof of $\eqref{l2}.$

The proofs of the remaining identities were presented in Propositions 3 and 4 of \cite{costa-ribeiro-zhou}. For the sake of completeness, we include them here. To prove \eqref{l23}, we begin by rewriting the identity from \cite[Lemma 1]{compact}:
\begin{eqnarray*}
u\left(\nabla_{i}R_{jk}-\nabla_{j}R_{ik}\right)&=& m R_{ijkl}\nabla_{l}u+\lambda\left(\nabla_{i}ug_{jk}-\nabla_{j}u g_{ik}\right)\nonumber\\&&-\left(\nabla_{i}u R_{jk}-\nabla_{j}u R_{ik}\right).
\end{eqnarray*} Expressing this in terms of the tensor $P = \mathrm{Ric} - \rho g$, we obtain
\begin{eqnarray*}
u\left(\nabla_{i}P_{jk}-\nabla_{j}P_{ik}\right)&+&u\left(\nabla_{i}\rho g_{jk} -\nabla_{j}\rho g_{ik}\right)\nonumber\\&=& mR_{ijkl}\nabla_{l}u+(\lambda -\rho)\left(\nabla_{i}u g_{jk}-\nabla_{j}u g_{ik}\right)\nonumber\\&&-\left(\nabla_{i}u P_{jk}-\nabla_{j}u P_{ik}\right). 
\end{eqnarray*} Moreover, by Eq. $\eqref{l1}$ and noting that $\rho = \frac{(n - 1) \lambda - R}{m - 1}$, one sees that $\frac{u}{2}\nabla \rho=P(\nabla u)$ (see also \cite[Proposition 5.2]{HPW}). Hence,
\begin{eqnarray}\nonumber
u\left(\nabla_{i}P_{jk}-\nabla_{j}P_{ik}\right)&+&2\left(P_{il}\nabla_{l}u g_{jk} -P_{jl}\nabla_{l}u g_{ik}\right)\nonumber\\&=& mR_{ijkl}\nabla_{l}u+(\lambda -\rho)\left(\nabla_{i}u g_{jk}-\nabla_{j}u g_{ik}\right)\nonumber\\&&-\left(\nabla_{i}u P_{jk}-\nabla_{j}u P_{ik}\right).\label{lmn450} 
\end{eqnarray}  

On the other hand, it follows from the definition of the Kulkarni-Nomizu product that

\begin{equation*}
(g\odot g)_{ijkl}\nabla_{l}u=2(g_{ik}\nabla_{j}u-g_{jk}\nabla_{i}u),
\end{equation*}
\begin{equation*}
(g\odot g)_{ijkl}P_{sl}\nabla_{s}u=2(P_{js}\nabla_{s}u g_{ik}-P_{is}\nabla_{s}u g_{jk})
\end{equation*} 
and
\begin{equation*}
(P\odot g)_{ijkl}\nabla_{l}u=(P_{ik}\nabla_{j}u-P_{jk}\nabla_{i}u)+(P_{jl}\nabla_{l}u g_{ik}-P_{il}\nabla_{l}u g_{jk}).
\end{equation*} 
Substituting these expressions into (\ref{lmn450}), we deduce
\begin{eqnarray*}
u(\nabla_{i}P_{jk}-\nabla_{j}P_{ik})&=&mR_{ijkl}\nabla_{l}u+(\rho-\lambda)(g_{ik}\nabla_{j}u-g_{jk}\nabla_{i}u)\nonumber\\&&+(P_{ik}\nabla_{j}u-P_{jk}\nabla_{i}u)+ 2(P_{jl}\nabla_{l}u g_{ik} - P_{il}\nabla_{l}u g_{jk})\nonumber\\&=& mR_{ijkl}\nabla_{l}u +\frac{(\rho-\lambda)}{2}(g\odot g)_{ijkl}\nabla_{l}u + (P\odot g)_{ijkl}\nabla_{l}u\nonumber\\&&+\frac{1}{2}(g\odot g)_{ijkl}P_{sl}\nabla_{s}u\nonumber\\&=& mQ_{ijkl}\nabla_{l}u+\frac{1}{2}(g\odot g)_{ijkl}P_{sl}\nabla_{s}u,
\end{eqnarray*} where the final equality follows from \eqref{tensorQ}, thus proving \eqref{l23}.

Finally, equations \eqref{l4} and \eqref{l5} follow directly from \eqref{l23}, since the scalar curvature is constant, $P(\nabla u) = 0$ and consequently

\begin{eqnarray*}
0=\nabla_{j}\left(P_{ik}\nabla_{i}u\right)=(\nabla_{j}P_{ik})\nabla_{i}u + P_{ik}\nabla_{j}\nabla_{i}u.
\end{eqnarray*}
This completes the proof of the lemma.

\end{proof}

Next, it is also useful to recall the expression for the Ricci tensor of a warped product in terms of the base and fiber metrics. The following proposition corresponds to \cite[Corollary 43]{O'Neil}

\begin{proposition}[\cite{O'Neil}]\label{prop-oneil}
	The Ricci curvature of a warped product manifold $M=B\times_\varphi F$ with $m=dim(F)$, $X, Y$ and $Z, V$ any horizontal and vertical vectors, respectively, satisfies:
	\begin{itemize}
		\item[(i)] $Ric(X,Y)=Ric_B(X,Y)-\frac{m}{\varphi}\nabla^{2}_{g_B}\varphi(X,Y)$,
		\item[(ii)] $Ric(X,V)=0$,
		\item[(iii)] $Ric(Z,V)=Ric_F(Z,V)-(\varphi\Delta_{g_B} \varphi+(m-1)|\nabla \varphi|^{2}_{g_B})g_F(Z,V)$.
	\end{itemize}
\end{proposition}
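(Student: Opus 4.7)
The statement is O'Neill's classical formula for the Ricci tensor of a warped product, so my plan is to reconstruct the standard chain of computations: work out the Levi-Civita connection on $M = B \times_\varphi F$, compute the Riemann curvature in the three relevant configurations (all horizontal, mixed, all vertical), and then trace. Throughout I denote by $\tilde X$ the horizontal lift of a vector field $X$ on $B$ and by $\tilde V$ the vertical lift of a vector field $V$ on $F$, and identify $X$ with $\tilde X$ when no confusion arises.

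The first step is to derive, from the Koszul formula applied to the metric $g = \pi_B^* g_B + (\varphi \circ \pi_B)^2 \pi_F^* g_F$, the three connection identities
\begin{equation*}
\nabla_X Y = \nabla^{B}_X Y, \qquad \nabla_X V = \nabla_V X = \frac{X(\varphi)}{\varphi}\, V, \qquad \nabla_V W = \nabla^{F}_V W - \frac{g(V,W)}{\varphi}\, \nabla^{B}\varphi.
\end{equation*}
These are the only facts one needs about the geometry of the warped product; everything else is an algebraic consequence. I would write out the last formula carefully because it is the one that produces the $|\nabla\varphi|^2$ term in part (iii).

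Next I would compute the $(1,3)$-curvature tensor in five cases: $R(X,Y)Z$, $R(X,Y)V$, $R(X,V)Y$, $R(X,V)W$, and $R(V,W)Z$. The outcomes (standard, cf.\ O'Neill Prop.\ 42) are
\begin{align*}
R(X,Y)Z &= R^{B}(X,Y)Z, \qquad R(X,V)Y = -\frac{\nabla^{2}_{g_B}\varphi(X,Y)}{\varphi}\, V, \\
R(X,V)W &= -\frac{g(V,W)}{\varphi}\,\nabla^{B}_X \nabla^{B}\varphi + \text{vertical term}, \\
R(V,W)Z &= 0 \ (\text{as a horizontal part of the identity above symmetrized}), \\
R(V,W)U &= R^{F}(V,W)U - \frac{|\nabla\varphi|^{2}_{g_B}}{\varphi^{2}}\bigl(g(V,U)W - g(W,U)V\bigr).
\end{align*}
Each line is a direct substitution of the connection formulas into $R(A,B)C = \nabla_A\nabla_B C - \nabla_B\nabla_A C - \nabla_{[A,B]}C$; the brackets $[\tilde X,\tilde V]=0$, $[\tilde X,\tilde Y]$ horizontal, and $[\tilde V,\tilde W]$ vertical simplify things considerably.

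Finally I would take traces with respect to a local orthonormal frame adapted to the splitting, i.e.\ horizontal $\{e_i\}_{i=1}^{\dim B}$ and vertical $\{e_\alpha\}_{\alpha=1}^{m}$ with $g(e_\alpha,e_\alpha)=1$ (so the lifts from $F$ are rescaled by $1/\varphi$). For (i), $\mathrm{Ric}(X,Y) = \sum_i g(R(e_i,X)Y,e_i) + \sum_\alpha g(R(e_\alpha,X)Y,e_\alpha)$; the first sum gives $\mathrm{Ric}_B(X,Y)$, and the second sum, using $R(X,e_\alpha)Y = -\varphi^{-1}\nabla^{2}_{g_B}\varphi(X,Y)\, e_\alpha$ and summing over $m$ vertical indices, yields $-\frac{m}{\varphi}\nabla^{2}_{g_B}\varphi(X,Y)$. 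For (ii), all the mixed curvature components pair horizontal with vertical in a way that is killed by an orthonormal frame of pure type, so each contribution vanishes. For (iii), $\mathrm{Ric}(Z,V) = \sum_i g(R(e_i,Z)V,e_i) + \sum_\alpha g(R(e_\alpha,Z)V,e_\alpha)$; the horizontal sum produces $-\frac{\varphi\Delta_{g_B}\varphi}{\varphi^{2}}g(Z,V)\cdot \varphi^{2} = -\varphi\Delta_{g_B}\varphi\, g_F(Z,V)$ after properly accounting for the warping factor in the fiber metric, and the vertical sum gives $\mathrm{Ric}_F(Z,V) - (m-1)|\nabla\varphi|^{2}_{g_B}\, g_F(Z,V)$.

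The main obstacle is the bookkeeping in the trace for part (iii): one has to keep track of the factor $\varphi^{2}$ relating $g|_F$ to $g_F$ when choosing an orthonormal frame, so that the combinations $\varphi\Delta_{g_B}\varphi$ and $(m-1)|\nabla\varphi|^2_{g_B}$ come out with the correct coefficients in front of $g_F(Z,V)$ rather than $g(Z,V)$. Everything else is a mechanical application of the connection formulas.
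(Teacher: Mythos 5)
This proposition is imported verbatim from O'Neill's book (Corollary 43 there); the paper offers no proof of its own, so there is no argument to compare yours against. Your outline is the standard derivation and the overall route is sound: the three connection identities you list are exactly the right starting point, the five curvature configurations are the right intermediate step, and the trace bookkeeping you describe for (iii) (rescaling the vertical frame by $1/\varphi$ so that $g(Z,V)=\varphi^2 g_F(Z,V)$ produces the coefficients $\varphi\Delta_{g_B}\varphi$ and $(m-1)|\nabla\varphi|^2_{g_B}$ in front of $g_F$ rather than $g$) is precisely where the computation lives.

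Two sign slips in your displayed curvature formulas should be fixed, even though your final traced identities are correct. First, with the convention $R(A,B)C=\nabla_A\nabla_B C-\nabla_B\nabla_A C-\nabla_{[A,B]}C$ that you state, a direct computation using $\nabla_V Y=\frac{Y\varphi}{\varphi}V$ and $[X,V]=0$ gives $R(X,V)Y=+\frac{\nabla^2_{g_B}\varphi(X,Y)}{\varphi}V$, not the minus sign you wrote; as written, your formula combined with the trace $\sum_\alpha g(R(e_\alpha,X)Y,e_\alpha)$ would produce $+\frac{m}{\varphi}\nabla^2_{g_B}\varphi(X,Y)$ in part (i), the wrong sign. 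Second, in the all-vertical formula the correction term should read $-\frac{|\nabla\varphi|^2_{g_B}}{\varphi^2}\bigl(g(W,U)V-g(V,U)W\bigr)$, i.e.\ with the bracket reversed relative to what you wrote; a quick sanity check on $\mathbb{R}^n\setminus\{0\}=(0,\infty)\times_r\mathbb{S}^{n-1}$, which must be flat, confirms this. Neither slip is conceptual, but since the rest of the argument is "mechanical," these are exactly the places where a mechanical execution would go wrong.
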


We now proceed to present all details of Example \ref{exA}.

\begin{example}[Example \ref{exA}]\label{ex2}
	Given $M^n=\mathbb{H}^{p+1}\times\mathbb{H}^q$, $q>1$, endowed with the metric 
	\begin{eqnarray*}
		g=dr^2+\sinh^2(r)g_{\mathbb{S}^{p}}+\frac{q-1}{m+p}g_{\mathbb{H}^q},
	\end{eqnarray*} 
	where $r(x,y)=r(x)$ is the height function of $\mathbb{H}^{p+1}.$ Consider $u(r)=\cosh(r)$ and $\lambda=-(m+p)$.

To verify that $M^n$ is indeed a quasi-Einstein manifold with $\lambda < 0$, we first observe that
\begin{eqnarray*}
	\nabla u=\sinh(r)\nabla r.
\end{eqnarray*} From this, using the Lie derivative, we obtain
\begin{eqnarray*}
	\nabla^2 u=\cosh(r)\left(dr^2+\sinh^2(r)g_{\mathbb{S}^p}\right).
\end{eqnarray*} Therefore, $$\nabla^2 u=u g_{\mathbb{H}^{p+1}}.$$ 
Next, since $g=g_{\mathbb{H}^{p+1}}+\frac{q-1}{p+1}g_{\mathbb{H}^q}$, we can express the Ricci tensor as
	\begin{eqnarray*}
		Ric=-p g_{\mathbb{H}^{p+1}}-(q-1) g_{\mathbb{H}^q}.
	\end{eqnarray*}  
Given that $\lambda = -(m + p)$, it follows that
	\begin{eqnarray*}
		Ric-\lambda g&=&-p g_{\mathbb{H}^{p+1}}-(q-1) g_{\mathbb{H}^{q}}+(m+p)\left(g_{\mathbb{H}^{p+1}}+\frac{q-1}{m+p}g_{\mathbb{H}^q}\right)\\
        &=& m g_{\mathbb{H}^{p+1}}.
	\end{eqnarray*} 
Thus, we arrive at
	\begin{equation*}
		\nabla^2 u=\frac{u}{m}(Ric-\lambda g),
	\end{equation*} which confirms that $\mathbb{H}^{p+1} \times \mathbb{H}^q$ is a quasi-Einstein manifold with $\lambda < 0$.

Furthermore, the scalar curvature is given by
\begin{eqnarray}\label{ex-R1}
R = \frac{q(m - n) + n(n - 1)}{m + n - q - 1} \lambda.
\end{eqnarray}
\end{example}

We point out that Example \ref{exA} is rigid but not Einstein. Moreover, we recall the following interesting result by He-Petersen-Wylie \cite{HPW2}. 

\begin{proposition}[cf. \cite{HPW2}]
	\label{prop-hpw}
	A non-trivial complete rigid $m$-quasi-Einstein manifold $(M^n,\,g,\,u,\,\lambda)$ is one of the examples in \cite[Proposition 2.4]{HPW2}, or its universal cover splits off as $$\widetilde{M}=(M_1,g_1)\times(M_2,g_2)\,\,\,\,\,\,\hbox{with}\,\,\,\,\,\,\,\,u(x,y)=u(y),$$ where $(M_{1},g_1,\,\lambda)$ is a trivial quasi-Einstein manifold and $(M_{2},\,g_2,\,u)$ is one of the examples in \cite[Proposition 2.4]{HPW2}.  
\end{proposition}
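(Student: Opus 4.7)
The plan is to unpack the definition of rigidity and then exploit the quasi-Einstein equation on the product structure of the universal cover. By the definition of \emph{rigid}, $\widetilde{M}$ is either Einstein, or a Riemannian product of Einstein manifolds. In the first case, $(\widetilde{M},g)$ has constant Ricci curvature and carries a non-trivial $m$-quasi-Einstein structure, so it falls directly into the list of Proposition 2.4 of \cite{HPW2} and we are done. So I may assume $\widetilde{M} = (N_1,h_1)\times\cdots\times(N_k,h_k)$, with each $(N_i,h_i)$ Einstein with Einstein constant $\mu_i$, and the potential $u$ lifts to $\widetilde{u}$ on $\widetilde{M}$.

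First, I would exploit the mixed components of the Hessian. For $X$ tangent to $N_i$ and $V$ tangent to $N_j$ with $i\neq j$, the product structure gives $\nabla_X V=0$, hence $\nabla^2\widetilde{u}(X,V)=X(V\widetilde{u})$; on the other hand $\mathrm{Ric}(X,V)=0$ and $g(X,V)=0$. The quasi-Einstein equation $\nabla^2\widetilde{u}=\frac{\widetilde{u}}{m}(\mathrm{Ric}-\lambda g)$ then forces $X(V\widetilde{u})=0$ for every such cross-pair, which implies the additive decomposition $\widetilde{u}(x_1,\ldots,x_k)=u_1(x_1)+\cdots+u_k(x_k)$.

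Next, I would restrict \eqref{eq-fundamental} to the diagonal block tangent to $N_i$:
\begin{equation*}
\nabla^{h_i,2}u_i(x_i)=\frac{u_1(x_1)+\cdots+u_k(x_k)}{m}(\mu_i-\lambda)h_i.
\end{equation*}
The left-hand side depends only on $x_i$, forcing a dichotomy: either $\mu_i=\lambda$ (in which case $u_i$ has parallel gradient on $N_i$), or $u_j$ is constant for every $j\neq i$. In particular, at most one index $i_0$ can have $\mu_{i_0}\neq\lambda$ together with non-constant $u_{i_0}$; call its factor $(M_2,g_2)$ and collect all remaining factors (on which $u$ is constant) into $(M_1,g_1)$. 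The restriction of $u$ to $M_2$ then satisfies the same quasi-Einstein equation with the same $\lambda$, and since $(M_2,g_2)$ is Einstein, it is a non-trivial $m$-quasi-Einstein manifold of \emph{constant Ricci curvature}. Applying Proposition 2.4 of \cite{HPW2} identifies $(M_2,g_2,u)$ with one of the listed models; the complement $(M_1,g_1)$ is a product of Einstein manifolds with constant $u$, hence a trivial quasi-Einstein manifold with constant $\lambda$.

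The main delicate point will be handling the degenerate sub-case where some factor has $\mu_i=\lambda$ and $u_i$ is non-constant with parallel gradient. In that situation $u_i$ is affine along an $\mathbb{R}$-direction that splits off $N_i$, and one must check that this either (a) can be absorbed into the factor $(M_2,g_2)$ so that the resulting model coincides with the $\mathbb{R}\times F$ example of \cite[Prop.~2.4]{HPW2}, or (b) contradicts the coexistence with another non-constant $u_{i_0}$ on a different factor. This bookkeeping — together with verifying that after the grouping the potential is genuinely of the form $u(x,y)=u(y)$ — is the technical heart of the argument, and I would carry it out by successively absorbing affine directions into the non-trivial factor and invoking the uniqueness of the decomposition coming from the classification in \cite[Prop.~2.4]{HPW2}.
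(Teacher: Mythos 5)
The paper does not prove this proposition at all: it is stated with the attribution ``cf.\ \cite{HPW2}'' and simply recalled from He--Petersen--Wylie, so there is no in-paper argument to compare yours against. Your reconstruction from the definition of rigidity is essentially correct and self-contained: the vanishing of the mixed Hessian components on the simply connected product does force the additive separation $\widetilde{u}=\sum_i u_i$, and the diagonal blocks give exactly the dichotomy you state. One remark, though: the ``main delicate point'' you defer at the end is not actually delicate --- it is already closed by the dichotomy you derived. If some factor $N_{i_0}$ has $\mu_{i_0}\neq\lambda$, then the equation $\nabla^{h_{i_0},2}u_{i_0}=\frac{\widetilde{u}}{m}(\mu_{i_0}-\lambda)h_{i_0}$ forces \emph{every} $u_j$ with $j\neq i_0$ to be constant, so a non-constant affine $u_i$ on a $\lambda$-Einstein factor cannot coexist with a non-constant $u_{i_0}$; and if every factor has $\mu_i=\lambda$, then $\widetilde{M}$ is globally $\lambda$-Einstein with $\nabla^2\widetilde{u}=0$, which is just the constant-Ricci-curvature case you already dispatched via \cite[Proposition 2.4]{HPW2} (de Rham splits off the line carrying the affine potential). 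So no separate ``absorption of affine directions'' is needed. The only small points worth adding for completeness are that $\mu_j=\lambda$ for the constant factors follows from $0=\nabla^2 u_j=\frac{\widetilde{u}}{m}(\mu_j-\lambda)h_j$ together with $\widetilde{u}>0$ on the interior (this is what makes $(M_1,g_1,\lambda)$ a trivial quasi-Einstein manifold), and that the factor $(M_2,g_2)$ inherits completeness and the boundary condition $u=0$ on $\partial M_2$ so that \cite[Proposition 2.4]{HPW2} genuinely applies to it.
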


To conclude this section, we present additional examples of rigid quasi-Einstein manifolds with $\lambda < 0$ that are also non-Einstein. The verification of these examples follows a similar approach to that used in Example \ref{exA}.   

\begin{proposition}[Nontrivial rigid quasi-Einstein manifolds with $\lambda<0,$ which are not Einstein]\label{prop-rigid} Let $(M^n,g,u,\lambda)$ be a simply connected nontrivial $m$-quasi-Einstein manifold with $\lambda<0$ which is not Einstein. Then, it is rigid if and only if it is one of the following structures: 
    \begin{itemize}
        \item[(a)] $[0,\infty)\times N$ with the metric $g=dr^2+g_N$, $\lambda=-m$ and $u(r,x)=\sinh(r),$
        \item[(b)] $\mathbb{R}\times N$ with the metric $g=dr^2+g_N$, $\lambda=-m$ and $u(r,x)=e^r,$
        \item[(c)] $\mathbb{R}\times N$ with the metric  $g=dr^2+g_N$, $\lambda=-m$ and $u(r,x)=\cosh(r),$
        \item[(d)] $[0,\infty)\times A\times N$ with the metric $g=dr^2+\cosh^2(r)g_A+g_N$, $Ric_A=-(n-q-2)$, $\lambda=-(m+n-q-1)$ and $u(r,y,x)=\sinh(r),$
        \item[(e)] $\mathbb{R}\times F\times N$ with the metric $g=dr^2+e^{2r}g_F+g_N$, $Ric_F=0$, $\lambda=-(m+n-q-1)$ and $u(r,y,x)=e^r,$
        \item[(f)] $\mathbb{H}^n\times N$ with the metric $g=dr^2+\sinh^2(r)g_{\mathbb{S}^{n-1}}+g_N$, $\lambda=m+n-q-1$ and $u(r)=\cosh(r),$
    \end{itemize} where $N^q$, $q\geq 2$, is $\lambda$-Einstein.
\end{proposition}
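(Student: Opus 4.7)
The plan is to reduce the classification to Proposition~\ref{prop-hpw} and then verify each candidate by direct computation, in the spirit of Example~\ref{exA}.

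For the ``only if'' direction, the first observation is that every example in \cite[Proposition~2.4]{HPW2} (as recalled in the introduction) has constant Ricci curvature and is therefore Einstein. Hence a rigid but non-Einstein $(M^n,g,u,\lambda)$ cannot itself be such an example, and Proposition~\ref{prop-hpw} forces the universal cover to split nontrivially as $\widetilde M=(M_1,g_1)\times(M_2,g_2)$ with $u(x,y)=u(y)$, where $(M_1,g_1)$ is $\lambda$-Einstein and $(M_2,g_2,u)$ is one of the $\lambda<0$ HPW examples. The next step is to enumerate the possibilities for $M_2$. In dimension one, equation \eqref{eq-fundamental} collapses to $u''=-\lambda u/m$; normalizing $\lambda=-m$, the three canonical solutions $u=\sinh r$ on $[0,\infty)$, $u=e^r$ on $\mathbb{R}$, and $u=\cosh r$ on $\mathbb{R}$ give rise, after taking the direct product with the $\lambda$-Einstein factor $N$, to cases (a), (b), (c). In higher dimensions, the $\lambda<0$ HPW factors are the standard hyperbolic space, the cosh-warped product $[0,\infty)\times A$ with $A$ Einstein of the appropriate curvature, and the exponentially warped product $\mathbb{R}\times F$ with $F$ Ricci flat; taking the product with $N$ yields cases (f), (d), (e), respectively.

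For the ``if'' direction, each proposed structure is verified directly. For a candidate $(M,g,u,\lambda)$ from the list, one computes $\nabla u$ and $\nabla^2 u$ from the explicit expression of $u$, evaluates $\mathrm{Ric}-\lambda g$ via Proposition~\ref{prop-oneil} on the (warped)-product base, and checks that \eqref{eq-fundamental} holds. Rigidity is automatic because each structure is a Riemannian product with an Einstein factor; the non-Einstein property of the total space is detected by comparing Ricci eigenvalues across the factors (e.g., in case (a), $\mathrm{Ric}(\partial_r,\partial_r)=0$ while $\mathrm{Ric}|_N=\lambda g_N\neq 0$).

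The main bookkeeping obstacle is expected in cases (d)--(f), where dimensions and curvature constants of the warped and Einstein factors must be matched. For example, the warped base $[0,\infty)\times A$ with $g=dr^2+\cosh^2(r)g_A$ in (d) qualifies as an HPW cosh-warped example only when $\mathrm{Ric}_A=-(\dim A-1)g_A$; combining this with $\dim A=n-q-1$ and the HPW relation $k=\lambda/(m+\dim M_2-1)=-1$ yields the stated constraints $\mathrm{Ric}_A=-(n-q-2)g_A$ and $\lambda=-(m+n-q-1)$. Analogous, shorter checks handle the remaining cases.
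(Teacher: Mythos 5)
Your proposal is correct and follows essentially the same route the paper intends: the ``only if'' direction is exactly the reduction via Proposition~\ref{prop-hpw} (a non-Einstein rigid manifold cannot itself be one of the Einstein examples of \cite[Proposition 2.4]{HPW2}, so its universal cover splits as a trivial $\lambda$-Einstein factor times one of the $\lambda<0$ examples), and the ``if'' direction is the direct verification that the paper explicitly defers to the computation of Example~\ref{exA}. Your sign bookkeeping in case (f), giving $\lambda=-(m+n-q-1)$, is the correct one (the positive sign in the statement is a typo).
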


\section{Some Key Results}
\label{sec3}
Throughout this section, we establish several key results that will be used in the proofs of Theorems \ref{ThmB} and \ref{ThmA}. In particular, we derive novel formulas and identify a set of possible values for the constant scalar curvature of noncompact quasi-Einstein manifolds with $\lambda<0.$

We begin by recalling that a non-constant function $f : M \to \mathbb{R}$ of class at least $C^2$ is said to be {\it transnormal} if
\begin{eqnarray}\label{eqt}
|\nabla f|^2 = b(f),
\end{eqnarray}
for some $C^2$ function $b$ defined on the range of $f$ in $\mathbb{R}$. Additionally, $f$ is said to be {\it isoparametric} if there exists a continuous function $a$, also defined on the range of $f$, such that
\begin{eqnarray}\label{eqi}
\Delta f = a(f).
\end{eqnarray} We refer the reader to \cite{GT,GT2,Wang} for further details.

From now on, consider an $m$-quasi-Einstein manifold $(M^n,\, g,\, u,\, \lambda)$ with constant scalar curvature $R.$ Since $m>1,$ one obtains from Eq. (\ref{eqmu}) that
\begin{eqnarray}
\label{transnormal}
|\nabla u|^2 = \frac{\mu}{m - 1} - \frac{R + (m - n)\lambda}{m(m - 1)} u^2.
\end{eqnarray} Consequently, the potential function $u$ is transnormal, with
\begin{eqnarray}
\label{eqb}
b(u) = \frac{\mu}{m - 1} - \frac{R + (m - n)\lambda}{m(m - 1)} u^2.
\end{eqnarray} In light of this, it is straightforward to verify from (\ref{eqtracefund}) that the potential function $u$ is also isoparametric.

We now recall the following useful lemma established in \cite[Lemma 23]{costa-ribeiro-zhou}.

\begin{lemma}[\cite{costa-ribeiro-zhou}]
\label{TRic}
Let $(M^n,g,u,\lambda)$ be an $m$-quasi-Einstein manifold with constant scalar curvature. Then we have:
\begin{eqnarray}
\label{righ}
    \mathring{R}_{ik}T_{ijk}\nabla_j u&=&\frac{(m+n-2)}{(n-2)}|\mathring{Ric}|^2|\nabla u|^2-\frac{(m+n-2)}{(n-2)}\nabla_j u\mathring{R}_{ik}\mathring{R}_{jk}\nabla_i u\nonumber\\
    & &+\frac{n(n-1)\lambda-(m+n-1)R}{n(n-1)}\mathring{Ric}(\nabla u,\nabla u)\nonumber\\&&-\frac{m}{n-2}\nabla_j u\mathring{R}_{ij}\mathring{R}_{il}\nabla_l u\nonumber\\
    &=& \frac{(n-2)}{2(m+n-2)}|T|^2.
\end{eqnarray} 
\end{lemma}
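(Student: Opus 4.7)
The plan is to prove both equalities by direct tensor algebra, leveraging the fact that when the scalar curvature is constant, identities \eqref{eq-ridu} and \eqref{ritdu} show $\nabla u$ to be an eigenvector of both $Ric$ and $\mathring{Ric}$. For the first equality I would substitute the explicit formula \eqref{as} for $T_{ijk}$ directly into $\mathring{R}_{ik}T_{ijk}\nabla_j u$. Since $R$ is constant, the $\nabla R$--term in \eqref{as} vanishes. In each remaining term I decompose $R_{ik}=\mathring{R}_{ik}+\tfrac{R}{n}g_{ik}$ and use $g^{ik}\mathring{R}_{ik}=0$ to annihilate the cross--terms that produce a metric trace; the factor $R_{jl}\nabla_l u$ appearing in the $\tfrac{m}{n-2}$--group is simplified via \eqref{eq-ridu}. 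Only three independent scalars survive: $|\mathring{Ric}|^2|\nabla u|^2$, $\mathring{R}_{ik}\mathring{R}_{jk}\nabla_i u\nabla_j u$, and $\mathring{Ric}(\nabla u,\nabla u)$. The coefficient of $\mathring{Ric}(\nabla u,\nabla u)$ receives contributions from three separate groups, and putting them over the common denominator $n(n-1)(n-2)$ produces a numerator that factors cleanly to deliver $\tfrac{n(n-1)\lambda-(m+n-1)R}{n(n-1)}$, yielding the first equality.

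For the second equality my strategy is to first rewrite $T_{ijk}$ itself in compact form and only then expand $|T|^2$. Using \eqref{ritdu} in the form $\mathring{R}_{jl}\nabla_l u=\tau\nabla_j u$ with $\tau=\tfrac{n(n-1)\lambda-(m+n-1)R}{n(m-1)}$, together with the same splitting $R_{ik}=\mathring{R}_{ik}+\tfrac{R}{n}g_{ik}$, formula \eqref{as} collapses to
$$T_{ijk}=\alpha\bigl(\mathring{R}_{ik}\nabla_j u-\mathring{R}_{jk}\nabla_i u\bigr)+\eta\bigl(\nabla_j u\,g_{ik}-\nabla_i u\,g_{jk}\bigr),$$
where $\alpha=\tfrac{m+n-2}{n-2}$ and $\eta=\eta(m,n,\lambda,R)$ is an explicit constant. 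Because $T=\tfrac{m+n-2}{m}uD$ by \eqref{eqDTa}, and the Cao--Chen tensor $D_{ijk}$ is manifestly trace-free in its last two indices, contracting the displayed expression with $g^{ik}$ immediately produces the key algebraic relation $\alpha\tau=(n-1)\eta$ without further computation. Setting $T^{(1)}_{ijk}=\mathring{R}_{ik}\nabla_j u-\mathring{R}_{jk}\nabla_i u$ and $T^{(2)}_{ijk}=\nabla_j u\,g_{ik}-\nabla_i u\,g_{jk}$, a short direct computation gives $|T^{(1)}|^2=2|\mathring{Ric}|^2|\nabla u|^2-2\mathring{R}_{ik}\mathring{R}_{jk}\nabla_i u\nabla_j u$, $\langle T^{(1)},T^{(2)}\rangle=-2\mathring{Ric}(\nabla u,\nabla u)$, and $|T^{(2)}|^2=2(n-1)|\nabla u|^2$. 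Expanding $|T|^2=\alpha^2|T^{(1)}|^2+2\alpha\eta\langle T^{(1)},T^{(2)}\rangle+\eta^2|T^{(2)}|^2$ and comparing to $2\alpha\cdot\mathring{R}_{ik}T_{ijk}\nabla_j u$ as already computed from the first equality, the difference is precisely $2\eta\bigl((n-1)\eta-\alpha\tau\bigr)|\nabla u|^2$, which vanishes by the algebraic identity above. This yields $|T|^2=2\alpha\,\mathring{R}_{ik}T_{ijk}\nabla_j u$, i.e.\ the second equality.

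The main obstacle is purely the constant-coefficient bookkeeping: one must combine the three distinct prefactors $\tfrac{m+n-2}{n-2}$, $\tfrac{m}{n-2}$, and $\tfrac{(n-1)(n-2)\lambda+mR}{(n-1)(n-2)}$ from \eqref{as} correctly, and in particular confirm the clean simplification of the $\mathring{Ric}(\nabla u,\nabla u)$ coefficient in step one. The conceptual shortcut afforded by the identity $T_{ijk}=\tfrac{m+n-2}{m}uD_{ijk}$ is crucial for the second equality, as it renders the trace-freeness of $T$ automatic and reduces the verification to the single relation $\alpha\tau=(n-1)\eta$ without any lengthy manipulation of the explicit form of $\eta$.
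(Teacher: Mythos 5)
Your argument is correct. Note first that the paper does not prove this lemma at all---it is quoted verbatim from \cite[Lemma 23]{costa-ribeiro-zhou}---so there is no in-text proof to compare against; your derivation is a valid, self-contained verification. The reduction of \eqref{as} (with $\nabla R=0$) to the two-block form $T_{ijk}=\alpha(\mathring{R}_{ik}\nabla_j u-\mathring{R}_{jk}\nabla_i u)+\eta(\nabla_j u\,g_{ik}-\nabla_i u\,g_{jk})$ via $R_{ik}=\mathring{R}_{ik}+\frac{R}{n}g_{ik}$ and \eqref{eq-ridu} is right, and I checked independently that the resulting $\eta$ equals $\frac{\alpha\tau}{n-1}$ with $\tau$ as in \eqref{ritdu}, so the identity $\alpha\tau=(n-1)\eta$ on which your second equality hinges does hold. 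One small imprecision: the contraction you need is $g^{ik}T_{ijk}=0$ (first and third slots), which is cleanest to read off from \eqref{eq3} using the total trace-freeness of the Cotton and Weyl tensors, rather than from the ``last two indices'' of $D_{ijk}$; since $D_{ijk}$ is in fact trace-free in every pair of indices, this does not affect the conclusion. With that, your computations of $|T^{(1)}|^2$, $\langle T^{(1)},T^{(2)}\rangle$, $|T^{(2)}|^2$ and the cancellation $|T|^2-2\alpha\,\mathring{R}_{ik}T_{ijk}\nabla_j u=2\eta\big((n-1)\eta-\alpha\tau\big)|\nabla u|^2=0$ all check out, and the first equality agrees with the displayed one after observing that $\frac{n(n-1)\lambda-(m+n-1)R}{n(n-1)}=\frac{(m-1)\tau}{n-1}$ and $\frac{m-1}{n-1}-\frac{m}{n-2}=-\frac{m+n-2}{(n-1)(n-2)}$.
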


As already mentioned, the $3$-tensor $T_{ijk}$ plays an important role in the study of quasi-Einstein manifolds. In particular, assuming that a quasi-Einstein manifold is $T$-flat, the authors obtained the following result.

\begin{proposition}[\cite{costa-ribeiro-zhou}]
\label{prop1-p}
	Let $(M^n,\,g,\,u,\,\lambda)$ be an $m$-quasi-Einstein manifold with constant scalar curvature and $m>1.$ Then $T$ vanishes identically if and only if the Ricci tensor has at most two distinct eigenvalues, one of which has multiplicity at least $n - 1$, and its eigenspace corresponds to the orthogonal complement of $\nabla u$.
\end{proposition}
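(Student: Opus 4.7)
The plan is to derive both directions of the equivalence from the single identity in Lemma \ref{TRic}, which asserts that
$$\mathring{R}_{ik}T_{ijk}\nabla_j u = \frac{(n-2)}{2(m+n-2)}|T|^2,$$
together with the explicit expansion of the left-hand side in terms of $\mathring{Ric}$, $\nabla u$, and the scalars $R,\lambda,m,n$. Since $T$ is a tensor, $T\equiv 0$ is equivalent to $|T|^2\equiv 0$, and the proposition therefore reduces to identifying when the explicit expression in the Lemma vanishes.

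The first step is to exploit that under the hypotheses ($m>1$ and constant scalar curvature), equation (\ref{ritdu}) forces $\nabla u$ to be an eigenvector of $\mathring{Ric}$ with eigenvalue $\alpha=\frac{n(n-1)\lambda-(m+n-1)R}{n(m-1)}$. On the open set where $\nabla u\neq 0$, I choose an orthonormal frame $\{e_1,\ldots,e_n\}$ diagonalizing $\mathring{Ric}$, with $e_1=\nabla u/|\nabla u|$ and eigenvalues $\mu_1=\alpha,\mu_2,\ldots,\mu_n$. Plugging into the formula of Lemma \ref{TRic} and repeatedly using $\mathring{R}_{ij}\nabla_j u=\alpha\nabla_i u$, so that all the quadratic cross terms collapse to $\alpha^2|\nabla u|^2$, together with the algebraic identity $\frac{n(n-1)\lambda-(m+n-1)R}{n(n-1)}=\frac{(m-1)\alpha}{n-1}$, a short computation should reduce the four terms to
$$\mathring{R}_{ik}T_{ijk}\nabla_j u=\frac{m+n-2}{n-2}\,|\nabla u|^2\left(|\mathring{Ric}|^2-\frac{n\alpha^2}{n-1}\right),$$
and combining with Lemma \ref{TRic} will then yield the clean identity
$$|T|^2=\frac{2(m+n-2)^2}{(n-2)^2}\,|\nabla u|^2\left(|\mathring{Ric}|^2-\frac{n\alpha^2}{n-1}\right).$$

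With this identity in hand, the equivalence becomes elementary. Trace-freeness gives $\sum_{a=2}^{n}\mu_a=-\alpha$, and the Cauchy-Schwarz inequality then implies $\sum_{a=2}^n \mu_a^2\geq\frac{\alpha^2}{n-1}$, with equality precisely when $\mu_2=\cdots=\mu_n$. Consequently $|\mathring{Ric}|^2=\alpha^2+\sum_{a=2}^n \mu_a^2\geq\frac{n\alpha^2}{n-1}$, with equality if and only if $\mu_2=\cdots=\mu_n=-\alpha/(n-1)$. This identifies $T\equiv 0$ on the open set $\{\nabla u\neq 0\}$ with exactly the eigenvalue structure claimed in the proposition; real analyticity of $u$ and $g$ in harmonic coordinates (recorded at the start of Section \ref{Sec2}) then propagates both the vanishing of $T$ and the eigenvalue structure across the nodal locus $\{\nabla u=0\}$ and handles the trivial case $u\equiv\mathrm{const}$ as well. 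The only real obstacle will be the bookkeeping in the first step, namely recognizing that the seemingly unrelated numerical coefficients conspire to produce the single compact expression $|\mathring{Ric}|^2-\frac{n\alpha^2}{n-1}$; once this is observed, a single line of Cauchy-Schwarz simultaneously closes both implications.
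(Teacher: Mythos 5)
Your proposal is correct and follows essentially the route the paper itself uses: the identity
\begin{equation*}
|T|^2=\frac{2(m+n-2)^2}{(n-2)^2}\,|\nabla u|^2\Bigl(|\mathring{Ric}|^2-\tfrac{n}{n-1}\alpha^2\Bigr),\qquad \alpha=\tfrac{n(n-1)\lambda-(m+n-1)R}{n(m-1)},
\end{equation*}
is exactly what the authors derive from Lemma~\ref{TRic} and \eqref{ritdu} in the proof of Proposition~\ref{propT0} (their $\theta$-term is precisely $\frac{m+n-2}{n-2}\cdot\frac{n}{n-1}\alpha^2|\nabla u|^2$), and your coefficient bookkeeping checks out. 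The only step you add is the Cauchy--Schwarz characterization of equality in $\sum_{a\ge 2}\mu_a^2\ge \alpha^2/(n-1)$, which correctly closes both implications on $\{\nabla u\neq 0\}$; the extension across the critical set is harmless since, with $R$ constant, every term of $T_{ijk}$ in \eqref{as} carries a factor of $\nabla u$, and analyticity propagates the eigenvalue structure.
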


Next, as a direct consequence of Lemma \ref{TRic}, we obtain the following proposition.

\begin{proposition}
\label{propT0}
    Let $(M^n,g,u,\lambda)$ be a nontrivial $m$-quasi-Einstein manifold with constant scalar curvature $R$ and $m>1.$  Then $T$ is identically zero if and only if either $R=\frac{n(n-1)}{(m+n-1)}\lambda$ or $R=(n-1)\lambda.$
\end{proposition}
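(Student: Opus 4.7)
The plan is to exploit Lemma \ref{TRic}, which gives both
\[
\mathring{R}_{ik}T_{ijk}\nabla_j u \;=\; \frac{(n-2)}{2(m+n-2)}|T|^2
\]
and an explicit polynomial expression of the same contraction in terms of $\mathring{Ric}$, $\nabla u$ and $R$. Under the constant scalar curvature hypothesis with $m>1$, equations \eqref{ritdu} and \eqref{normricdu} yield $\mathring{Ric}(\nabla u)=\sigma\nabla u$ with $\sigma=\frac{S}{n(m-1)}$, where $S:=n(n-1)\lambda-(m+n-1)R$, and $|\mathring{Ric}|^2=\frac{(R-n\lambda)S}{n(m-1)}$.

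Substituting $\mathring{Ric}(\nabla u)=\sigma\nabla u$ into the first form of the identity in Lemma \ref{TRic} collapses both quadratic contractions $\nabla_j u\,\mathring{R}_{ik}\mathring{R}_{jk}\nabla_i u$ and $\nabla_j u\,\mathring{R}_{ij}\mathring{R}_{il}\nabla_l u$ to $\sigma^{2}|\nabla u|^2$, and $\mathring{Ric}(\nabla u,\nabla u)$ to $\sigma|\nabla u|^2$. Inserting the explicit values of $\sigma$ and $|\mathring{Ric}|^2$ above, the lemma reduces to $\Phi(R,\lambda)|\nabla u|^2=\frac{(n-2)}{2(m+n-2)}|T|^2$ for a scalar $\Phi(R,\lambda)$ depending only on $R,\lambda,m,n$.

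The key algebraic step is to simplify $\Phi$ to the factored form $\Phi(R,\lambda)=c\,S\,\bigl(R-(n-1)\lambda\bigr)$ for a nonzero constant $c$ (in fact $c=\frac{m(m+n-2)}{(n-1)(n-2)(m-1)^2}$). Once this factorization is in hand, both implications follow easily: if $T\equiv 0$, then $\Phi(R,\lambda)|\nabla u|^2\equiv 0$ on $M$, and since the solution is nontrivial, $\nabla u$ is nonzero on an open set, forcing $\Phi(R,\lambda)=0$, hence $S=0$ (i.e., $R=\frac{n(n-1)\lambda}{m+n-1}$) or $R=(n-1)\lambda$. Conversely, each of the two values makes $\Phi$ vanish identically, so the identity reduces to $0=\frac{(n-2)}{2(m+n-2)}|T|^2$, giving $T\equiv 0$.

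The only real obstacle is the bookkeeping for the factorization of $\Phi$. The cancellation hinges on the identity $(n-1)(2m+n-2)-(n-2)(m-1)=n(m+n-2)$, which, after clearing denominators and collecting terms in $S$, allows the factor $R-(n-1)\lambda$ to emerge cleanly; without it, the linear factor would not be visible.
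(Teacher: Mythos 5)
Your proposal is correct and follows essentially the same route as the paper: substitute the eigenvalue relation \eqref{ritdu} into Lemma \ref{TRic}, use \eqref{normricdu}, and factor the resulting scalar coefficient as a constant multiple of $\bigl(R-(n-1)\lambda\bigr)\bigl(n(n-1)\lambda-(m+n-1)R\bigr)$; your constant $c=\frac{m(m+n-2)}{(n-1)(n-2)(m-1)^2}$ agrees with the paper's $\overline{\theta}$ after accounting for the normalization $\frac{(n-2)^2}{2(m+n-2)^2}|T|^2$. The concluding step (nontriviality of $u$ forces $\nabla u\neq 0$ on an open set, hence the scalar factor vanishes) is exactly what the paper leaves implicit.
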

\begin{proof}
Substituting (\ref{ritdu}) into Lemma \ref{TRic}, one obtains that an $m$-quasi-Einstein manifold with constant scalar curvature must satisfy

\begin{eqnarray}
\frac{(n-2)}{2(m+n-2)}|T|^2&=&\frac{(m+n-2)}{(n-2)} |\mathring{Ric}|^{2} |\nabla u|^2 \nonumber\\ &&- \theta\Big(n(n-1)\lambda-(m+n-1)R\Big)^{2} |\nabla u|^2,
\end{eqnarray} where $\theta=\frac{(m+n-2)}{n(n-1)(n-2)(m-1)^2}.$ Hence, by using (\ref{normricdu}), one sees that 

\begin{equation}
\frac{(n-2)^2}{2(m+n-2)^2}|T|^2= \overline{\theta}\Big(R-(n-1)\lambda\Big)\Big(n(n-1)\lambda-(m+n-1)R\Big) |\nabla u|^2,
\end{equation} where $\overline{\theta}=\frac{m}{(n-1)(m-1)^2}.$ This proves the stated result. 
\end{proof}

Inspired by \cite{costa-ribeiro-zhou} and following the approach of \cite{Fl-Gr}, we provide a list of all possible values for the constant scalar curvature $R$ of a noncompact quasi-Einstein manifold with $\lambda < 0$ and $\mu < 0$. To do so, we consider the set of minimum points of $u$ given by
\begin{eqnarray*}
    MIN(u)=\{p\in M:\, u(p)=u_{\min}\}.
\end{eqnarray*} Now, we may state our next result.

\begin{theorem}
	\label{theo1-i}
	Let $(M^n,\,g,\,u,\,\lambda)$ be an $m$-quasi-Einstein manifold with $\lambda<0$, $\mu<0$, $m>1$ and constant scalar curvature $R.$ Then we have:
	\begin{eqnarray}
		\label{estR-i}
		R\in\left\{\frac{n(n-1)}{m+n-1}\lambda,\,\frac{m+n(n-2)}{m+n-2}\lambda,\,\ldots,\,(n-1)\lambda,\,n\lambda \right\}.
	\end{eqnarray}
	In general, one has $R=\frac{\kappa(m-n)+n(n-1)}{m+n-\kappa-1}\lambda,$ for some $\kappa\in\{0,1,\ldots,n-1,n\}.$
\end{theorem}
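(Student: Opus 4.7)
The plan is to exploit the transnormal and isoparametric character that constant scalar curvature forces on $u$, and then read off the possible values of $R$ from the eigenvalue structure of $\nabla^2 u$ at the minimum set of $u$.

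Substituting constant $R$ into \eqref{eqmu} and combining with \eqref{eqtracefund} yields
\begin{equation*}
|\nabla u|^2 = b(u) := \frac{\mu}{m-1} - \frac{R + (m-n)\lambda}{m(m-1)}\,u^2, \qquad \Delta u = \frac{u}{m}\bigl(R - n\lambda\bigr),
\end{equation*}
so $u$ is simultaneously transnormal and isoparametric. Since $\mu<0$ and $b(u)\ge 0$, we must have $R + (m-n)\lambda < 0$, and the positive number $u_{\min}$ with $u_{\min}^2 = m\mu/(R+(m-n)\lambda)$ is a simple zero of $b$ (with $b'(u_{\min})\neq 0$). By real-analyticity of $u$ and $g$ together with the standard structure theory of transnormal functions, the minimum set $MIN(u)=u^{-1}(u_{\min})$ is a smooth embedded submanifold of some dimension $\kappa \in \{0,1,\dots,n\}$, and in a tubular neighborhood of $MIN(u)$ the function $u$ depends only on the distance $r$ to $MIN(u)$ via the ODE $u'(r)^2 = b(u(r))$.

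Now pick $p\in MIN(u)$. Since $\nabla u\equiv 0$ on $MIN(u)$, one has $T_p MIN(u)\subseteq \ker \nabla^2 u(p)$, and the quasi-Einstein equation $\nabla^2 u = \tfrac{u}{m}(Ric-\lambda g)$ forces $\lambda$ to be a Ricci eigenvalue at $p$ of multiplicity at least $\kappa$ along those directions. In Fermi coordinates centered at $p$, the representation $u(r)=u_{\min}+\tfrac12 u''(0)r^2 + O(r^4)$ combined with the ODE gives $u''(0)=\tfrac12 b'(u_{\min})$, and hence
\begin{equation*}
\nabla^2 u(p)\bigr|_{N_p MIN(u)} = \tfrac{1}{2}\,b'(u_{\min})\cdot \mathrm{Id},
\end{equation*}
uniformly in every normal direction. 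Inserting $b'(u_{\min}) = -\tfrac{2(R+(m-n)\lambda)}{m(m-1)}\,u_{\min}$ into the quasi-Einstein equation shows that the remaining $n-\kappa$ Ricci eigenvalues at $p$ all coincide with
\begin{equation*}
\alpha := \frac{(n-1)\lambda - R}{m-1},
\end{equation*}
consistent with \eqref{eq-ridu} on regular level sets.

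Taking the trace then yields $R=\kappa\lambda+(n-\kappa)\alpha$; solving for $R$ produces
\begin{equation*}
R = \frac{\kappa(m-n) + n(n-1)}{m+n-\kappa-1}\,\lambda, \qquad \kappa\in\{0,1,\dots,n\},
\end{equation*}
which is exactly the claimed list (the extreme cases $\kappa=0$, $\kappa=n-1$, $\kappa=n$ reproduce respectively the Einstein value $\tfrac{n(n-1)}{m+n-1}\lambda$, the value $(n-1)\lambda$, and the trivial value $n\lambda$). The main technical hurdle is the isotropy of $\nabla^2 u(p)$ on the normal bundle of $MIN(u)$: this rests on the Wang-type structure theorem for transnormal functions, which guarantees that $u$ is radial about its focal set; without this rigidity, the trace argument would only yield inequalities rather than the discrete set of admissible values for $R$.
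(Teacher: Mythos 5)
Your proposal is correct and follows the same overall strategy as the paper's proof: use the transnormal/isoparametric structure forced by constant scalar curvature to realize $MIN(u)$ as a smooth $\kappa$-dimensional submanifold, determine the two Ricci eigenvalues there from the quasi-Einstein equation, and take the trace to solve for $R$. The one genuinely different tactical choice is how you pin down the normal eigenvalue: the paper first derives $Tr(P)=\kappa(\lambda-\rho)$ by matching the expansion $\Delta r=Tr(\mathcal{A}_\eta)+\frac{n-\kappa-1}{r}+O(r)$ against the traced equation \eqref{eqtracefund}, and then invokes the algebraic identity $P\circ(P-(\lambda-\rho)I)=0$ at $Crit(u)$ (Eq. \eqref{l5} of Lemma \ref{lemma-aux}) to see that the only eigenvalues are $0$ and $\lambda-\rho$; you instead read the normal eigenvalue directly from the radial profile via $u''=\tfrac12 b'(u)$ and the isotropy of $\nabla^2 u$ on the normal bundle. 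Your version is somewhat more elementary (it avoids Lemma \ref{lemma-aux} entirely), at the cost of needing the full pointwise statement that $\nabla^2 u(p)$ is a multiple of the identity on $N_pMIN(u)$ rather than just its trace; as you note, both versions ultimately rest on the same Wang-type structure theory guaranteeing that $u$ is radial about its focal set. Two small points where the paper is more careful: (i) it justifies that the critical set is non-empty in the case $\lambda<0$, $\mu<0$ by citing Propositions 3.11 and Remark 3.12 of \cite{HPW2}, whereas you tacitly assume the infimum $u_{\min}$ is attained (observe also that $\mu<0$ forces $\partial M=\emptyset$, since $b(0)=\mu/(m-1)<0$ would contradict $|\nabla u|^2\geq 0$ at the boundary); and (ii) it records that $Tr(P)$ being globally constant forces all connected components of $MIN(u)$ to have the same dimension, so that $\kappa$ is well defined.
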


\begin{proof}
To begin with, since $\lambda < 0$, it follows from \cite[Remark 3.12]{HPW2}, in conjunction with \cite[Proposition 3.11]{HPW2}, that there is only one case to consider, namely, when the potential function has a non-empty critical point set, which corresponds to the condition $\mu < 0$. In this setting, we may take $$u(r)=\cosh(\sqrt{-\alpha}r),$$ where $\alpha=\frac{R+(m-n)\lambda}{m(m-1)}.$ As the potential function admits only minimum values, we deduce that the critical set of $u$ coincides with the set of minimum values, that is, $Crit(u)=MIN(u).$ Thus, we may identify $MIN(u)=r^{-1}(0).$ 

Following the argument in \cite[Lemma 7]{Wang},  with suitable adaptations, and using the fact that $u$ vanishes on each boundary component, we conclude that each connected component of $MIN(u)$ is a smooth submanifold. Consequently, from \cite[Lemma 9]{costa-ribeiro-zhou}, it follows that
	\begin{eqnarray}\label{laplacian r}
		\Delta r=Tr(\mathcal{A}_\eta)+\frac{n-\kappa-1}r+O(r),
	\end{eqnarray} where $\kappa$ is the dimension of a connected component $N$ of $MIN(u),$ and $\mathcal{A}_{\eta}$ is the second fundamental form with respect to the normal vector $\eta.$ 
	
	Given $u(r)=\cosh(\sqrt{-\alpha}r)$, we compute
	\begin{eqnarray*}
		\nabla_i\nabla_j u=\sqrt{-\alpha}\sinh(\sqrt{-\alpha} r)\nabla_i\nabla_j r-\alpha\cosh(\sqrt{-\alpha}r)\nabla_i r\nabla_j r 
	\end{eqnarray*}
	and 
	\begin{eqnarray}\label{delta-u}
		\Delta u=\sqrt{-\alpha}\sinh(\sqrt{-\alpha}r)\Delta r-\alpha\cosh(\sqrt{-\alpha}r)|\nabla r|^2.
	\end{eqnarray} So, by using the Taylor expansions, around $r=0,$ 
	\begin{eqnarray*}
		\sinh(\sqrt{-\alpha}r)=\sqrt{-\alpha} r+O(r^3)\;\,\,\,\,\mathrm{and}\;\,\,\,\,\,\cosh(\sqrt{-\alpha}r)=1+O(r^2),
	\end{eqnarray*} and applying \eqref{laplacian r} and \eqref{delta-u}, one sees that
	\begin{eqnarray}\nonumber
		\Delta u&=&(-\alpha r+O(r^3))\left(\textrm{Tr}(\mathcal{A}_\eta)+\frac{n-\kappa-1}r+O(r)\right)+(-\alpha+O(r^2))\\\label{du-o}
		&=&-\alpha(n-\kappa)+O(r),
	\end{eqnarray} where we also used that $r$ is a distance function, so $|\nabla r|=1$
    
	Next, from \eqref{ppa}, we know that $$P=Ric-\left(\frac{(n-1)\lambda-R}{m-1}\right)g$$ and defining $\rho=\frac{(n-1)\lambda-R}{m-1},$ we rewrite (\ref{eqtracefund}) on the connected component $N\subset MIN(u)$ as

	\begin{eqnarray}\label{du-trp}
		\Delta u=\frac{1}{m}(Tr(P)-n(\lambda-\rho)),
	\end{eqnarray} where we have used that $u\mid_{N}=1.$ Since $\alpha=\frac{\lambda-\rho}{m},$ we may combine \eqref{du-o} (restricted to $N$) and \eqref{du-trp} to obtain:
		\begin{eqnarray*}
		Tr(P)=\kappa(\lambda-\rho).
	\end{eqnarray*} In particular, this identity implies that all connected components of $MIN(u)$ have the same dimension, as $Tr(P)$ is constant on $M^n,$ and $\lambda-\rho\leq 0,$ with equality if and only if $R=\frac{n(n-1)\lambda}{m+n-1},$ i.e., when the manifold is Einstein. 
	
We now claim that tangent and normal vector fields to $N$ are eigenvectors of $P,$ corresponding to eigenvalues $\lambda-\rho$ and $0,$ respectively. Indeed, let $p\in N$ and $X\in \mathfrak{X}(N)$ a tangent vector at $p.$ Thereby, as $\nabla u\mid_N =0,$ we have  
\begin{equation*}
		\nabla^2 u(X)(p)=\nabla_{X}\nabla u (p)=0,
	\end{equation*} where we used the fact that this expression depends only on the value of $X(p)$ and the behavior of $\nabla u$ along a curve through $p$ with tangent vector $X.$ Then, from Eq. \eqref{eq-fundamental}, one deduces that
	\begin{equation*}
		0=\nabla_{X}\nabla u (p)=\frac{u}{m}\left(P(X)-(\lambda-\rho)X\right).
	\end{equation*} which yields $P(X)=(\lambda -\rho)X,$ for all $X\in \mathfrak{X}(N).$ Thus, the tangent vectors to $N$ correspond to the eigenvalue $\lambda-\rho.$ Moreover, from Eq. \eqref{l5} in Lemma \ref{lemma-aux}, we know that at $Crit(u)$, 
	\begin{eqnarray*}
		P\circ(P-(\lambda-\rho)I)=0.
	\end{eqnarray*} 
Hence, the only possible eigenvalues of $P$ at $N$ are $\lambda-\rho$ and $0.$ Given that $Tr(P)=\kappa(\lambda-\rho)$ and $\kappa=dim(N),$ we conclude that the normal vectors to $N$ correspond to the eigenvalue $0.$

Therefore, the tensor $P$ restricted to $N$ is represented, in a suitable basis, by the block matrix:

	\begin{eqnarray*}
		P\mid_N=\left(\begin{array}{cc}
		(\lambda-\rho)I_\kappa & 0 \\
			0 & [\textbf{0}]_{n-\kappa}
		\end{array}\right),
	\end{eqnarray*} as $n\times n$ matrix. In terms of the Ricci tensor, this corresponds to

	\begin{eqnarray}\label{eq-trace}
		Ric\mid_N=\left(\begin{array}{cc}
			\lambda I_\kappa & 0 \\
			0 & \frac{(n-1)\lambda- R}{m-1} I_{n-\kappa}
		\end{array}\right).
	\end{eqnarray} Taking the trace in the expression above yields

	\begin{eqnarray*}
		R=\frac{\kappa(m-n)+n(n-1)}{m+n-\kappa-1}\lambda,
	\end{eqnarray*}
	for some $\kappa\in \{0,1\ldots,n-1,n\}.$ This concludes the proof of the theorem.
	
	\end{proof}

	\begin{remark}
	We point out that if $\kappa=0$ or $\kappa=n$ in Theorem \ref{theo1-i}, then one obtains $R=\frac{n(n-1)}{m+n-1}\lambda$ or $R=n\lambda,$ respectively. In both cases, it follows from Eq. (\ref{normricdu}) that $(M^n,\,g)$ is an Einstein manifold. Thus, we may apply Proposition 2.4 of \cite{HPW2} to obtain a classification. In particular, when $R=n\lambda$ and $\mu<0,$ Theorem \ref{theo1-i} yields $dim(Crit(u))=dim(M)=n.$ As $u$ is real analytic in harmonic coordinates, it follows that $u$ must be constant on $M^n.$ 
	\end{remark}

\medskip

\section{Proof of the Main Results}
\label{Sec4}

In this section, we shall present the proofs of Theorems \ref{ThmB} and \ref{ThmA}.

\subsection{Proof of Theorem \ref{ThmB}}

\begin{proof}
	Initially, since $T\equiv 0,$ we may invoke Proposition \ref{propT0} to deduce that either $R=\frac{n(n-1)}{m+n-1}\lambda$ or $R=(n-1)\lambda.$ In the first case, namely when $R=\frac{n(n-1)}{m+n-1}\lambda,$ we may apply Eq. (\ref{normricdu}) to conclude that $|\mathring{Ric}|^2=0,$ and hence $(M^n,\,g)$ is an Einstein manifold. Therefore, by \cite[Proposition 2.4]{HPW2}, the manifold $M^n$ is isometric to one of the following:

\begin{itemize}
\item[(a)] the standard hemisphere $\mathbb{S}_+^n,$ $g=dr^2+\sin^2(r) g_{\mathbb{S}^{n-1}},$ $u=\cos(r),$ $\lambda>0,$ where $r$ is a height function with $r\leq \frac{\pi}{2};$ 
\item[(b)]  $[0,\infty)\times F,$ $g=dt^2+g_{F},$ $u(t,x)=Ct,$ $\lambda=0;$ 
\item[(c)] the hyperbolic space $\mathbb{H}^n,$ $g=dt^2+\sqrt{-k}\sinh^2(\sqrt{-k} t)g_{\mathbb{S}^{n-1}},$ $u=C\cosh (\sqrt{-k}t),$ $\lambda<0;$ 
\item[(d)]  $[0,\infty)\times N,$ $g=dt^2+\sqrt{-k}\cosh^2(\sqrt{-k}t)g_{\mathbb{S}^{n-1}},$ $u(t,x)=C\sinh(\sqrt{-k}t),$ $\lambda<0;$
\item[(e)]  $\mathbb{R}\times F,$ $g=dt^2+e^{2\sqrt{-k}t}g_{F},$ $u(t,x)=Ce^{2\sqrt{-k}t},$ $\lambda<0,$ 
\end{itemize} where $F$ is Ricci flat, $N$ is an Einstein metric with negative Ricci curvature, $C$ is an arbitrary positive constant and $k=\frac{\lambda}{m+n-1}.$

Now, we assume that $R=(n-1)\lambda.$ If $M^n$ is compact without boundary, it follows from \cite{CaseShuWei} that the manifold is trivial. On the other hand, if $M^n$ is compact with boundary, Theorem 2 in \cite{costa-ribeiro-zhou} ensures that $M^n$ is isometric to 

	\begin{itemize}
		\item[(f)] $I\times N$ with product metric, where $N$ is a compact $\lambda$-Einstein manifold.
		\end{itemize}

Next, we suppose that $M^n$ is noncompact. In this case, we already know that $\lambda\leq 0.$ If $\lambda=0,$ Proposition \ref{propL0} implies that  $M^n$ is isometric to 

	\begin{itemize}
		\item[(g)] $[0,\infty)\times F$ with the metric $g=dt^2 +g_{F},$ where $F$ is Ricci flat.
		\end{itemize}

We now assume that $R=(n-1)\lambda$ and $\lambda<0.$ Then, from \eqref{l1}, the eigenvalue $\lambda_1$ of the Ricci tensor associated with the direction $\nabla u$ satisfies $$\lambda_1=|\nabla u|^{-2}Ric(\nabla u,\nabla u)=0.$$ Our next goal is to prove that all remaining eigenvalues of the Ricci tensor are equal to $\lambda.$ First, since $R=(n-1)\lambda,$ we apply Eq. (\ref{normricdu}) to obtain:
	\begin{eqnarray}\label{eq-(n-1)}
		|\mathring{Ric}|^2=\frac{R^2}{n(n-1)}.
	\end{eqnarray}

	Let $\lambda_i,$ $i\geq 2,$ denote the nonzero eigenvalues of the Ricci tensor. Then
	\begin{equation*}
		\sum_{i=2}^{n}(\lambda_i-\lambda)^2 = |Ric|^2-2\lambda R+(n-1)\lambda^2
		%&=&|\mathring{Ric}|^2+\frac{R^2}{n}-2\frac{R^2}{n-1}+\frac{R^2}{n-1}\\
		=|\mathring{Ric}|^2-\frac{R^2}{n(n-1)},
	\end{equation*} where we used the identity $|\mathring{Ric}|^2=|Ric|^2-\frac{R^2}{n}$ and $R=(n-1)\lambda.$ Substituting \eqref{eq-(n-1)} yields
	
	$$\sum_{i=2}^{n}(\lambda_i-\lambda)^2=0,$$ which implies that $\lambda_2=\ldots=\lambda_{n}=\lambda.$ Therefore, all eigenvalues of the Ricci tensor are constant. Thus, since the Ricci tensor is parallel, one sees from  Eq. \eqref{cotton} that the Cotton tensor also vanishes, and hence, by Lemma \ref{lem1}, we have $W_{ijkl}\nabla_l u=0.$ We are now in a position to invoke \cite[Theorem 1.2]{HPW}, which implies that the metric splits as $$g=dt^2+\varphi^2(t)\widetilde{g}_{_N},$$ where $\widetilde{g}_N$ is $\beta$-Einstein and $u=u(t).$

From Eq. \eqref{l1}, we obtain $$Ric(\nabla u,\nabla u)=\frac{(n-1)\lambda-R}{m-1}(u')^2=0.$$ Hence, applying Proposition \ref{prop-oneil}, one sees that

	\begin{eqnarray*}
		0=Ric(\nabla u,\nabla u)=(u')^2 Ric(\partial t,\partial t)=-(u')^2\frac{(n-1)}{\varphi}\varphi''.
	\end{eqnarray*} Since $u$ is analytical in harmonic coordinates and not constant, it follows that $\varphi''(t)/\varphi(t)=0,$ and thus $\varphi(t)=c$ or $\varphi(t)=c t$, for some positive constant $c.$ However, the latter case is excluded by \cite[Proposition 2]{costa-ribeiro-zhou}. Therefore, $g=dt^2+c^2 \widetilde{g}_{_N}$, where $\widetilde{g}_{_N}$ is $\beta$-Einstein. Since the scalar curvature is negative, we then deduce that
	$$R=\frac{(n-1)\beta}{c^2}=(n-1)\lambda<0,\,\,\,\hbox{i.e.,}\,\,\,\beta=c^2\lambda<0.$$ Thereby, $(N,\,g_N)$ with $g_N=c^2 \widetilde{g}_{N}$ is $\lambda$-Einstein. 
	
	Now, by \cite[Proposition 3.11]{HPW2}, the potential function $u(t)$ of an $m$-quasi-Einstein manifold with $\lambda<0$ must be of the form:
	
	\begin{eqnarray}
		\label{eq-pv}
		\left\{%
		\begin{array}{lll}
			u(t)=\cosh(t), & \hbox{if $\mu<0$,} \\
			u(t)=e^t, & \hbox{if $\mu=0$,} \\
			u(t)=\sinh(t), & \hbox{if $\mu>0$.} \\
		\end{array}%
		\right.
	\end{eqnarray} 
As it follows from the above that the warped product $I\times N$ is rigid, we apply Proposition \ref{prop-hpw} along with \eqref{eq-pv} to conclude that $(M^n,g)$ is isometric to one of the following:
	\begin{itemize}
		\item[(h)] $\mathbb{R}\times N$, with $u(t,x)=\cosh(t)$, when $\mu<0;$
		\item[(i)] $\mathbb{R}\times N$ with $u(t,x)=e^{t}$, when $\mu=0;$
		\item[(j)] $[0,\infty)\times N$ with $u(t,x)=\sinh(t)$, when $\mu>0$,
	\end{itemize} where $N$ is $\lambda$-Einstein.

	The converse statement is straightforward, which completes the proof of Theorem \ref{ThmB}. 
\end{proof}

\subsection{Proof of Theorem \ref{ThmA}}

\begin{proof}
Since $(M^3,g)$ has constant scalar curvature and, by Equation \eqref{l1} in Lemma \ref{lemma-aux}, $\nabla u$ is an eigenvector of 
$P,$ we consider an orthonormal frame $\{e_i\}_{i=1}^{3}$ that diagonalizes $P,$ with $e_1=-\frac{\nabla u}{|\nabla u|}$ and let $\mu_i$ denote the eigenvalues corresponding to $e_i,$ for $i=1,2,3.$ In this coordinate system, it follows that
\begin{eqnarray*}
    P=\left(\begin{array}{ccc}
        0 & 0 & 0\\
        0 & \mu_2 & 0\\
        0 & 0 & \mu_3
    \end{array}\right), 
\end{eqnarray*} where
\begin{eqnarray}\label{system1}
    \begin{cases}
    \mu_2+\mu_3=Tr(P),\\
    \mu_{2}^{2}+\mu_{3}^{2}=|P|^2.
    \end{cases}
\end{eqnarray} Taking into account that $\rho=\frac{(n-1)\lambda-R}{m-1}$, one can rewrite the scalar curvature in dimension three as 
\begin{equation}
\label{p12an}
R=2\lambda-(m-1)\rho.
\end{equation} Then, using Equation \eqref{l2} in Lemma \ref{lemma-aux}, we deduce:
\begin{eqnarray*}
    |P|^2=(\lambda-\rho)Tr(P)=(\lambda-\rho)(2\lambda-(m+2)\rho).
\end{eqnarray*} 
Substituting this into \eqref{system1} yields
\begin{eqnarray*}
\mu_{2}^{2}+\mu_{3}^{2}=(\lambda-\rho)(2\lambda-(m+2)\rho).
\end{eqnarray*} 

Now, observing that $(\mu_2+\mu_3)^2=(Tr(P))^{2},$ we have $\mu_{2}^{2}+\mu_{3}^{2}+2\mu_2 \mu_3=(Tr(P))^{2}.$ Hence, one obtains that
\begin{eqnarray}\nonumber
    2\mu_2 \mu_3&=&(Tr(P))^{2}-(\mu_{2}^{2}+\mu_{3}^{2})\\\nonumber
    &=& (Tr(P))[Tr(P)-(\lambda-\rho)]\\\nonumber
    &=&(Tr(P))[2\lambda-(m+2)\rho-\lambda+\rho]\\\label{eq4}
    &=& [2\lambda-(m+2)\rho][\lambda-(m+1)\rho],
\end{eqnarray} where we have used that $Tr(P)=2\lambda-(m+2)\rho$. This shows that $\mu_2 \mu_3$ is constant. 

We now assume that $\mu_2\mu_3=0.$ By analyticity of the metric $g,$ it follows that either  $\mu_2=0$ or $\mu_3=0.$ Suppose $\mu_3=0,$ then by \eqref{system1}, we obtain that $\mu_2=Tr(P).$ Therefore, all eigenvalues of $P$ are constant, and hence $P$ is parallel. In particular, the Cotton tensor $C_{ijk}$ vanishes, and since the Weyl tensor is identically zero in dimension three, it follows that $T\equiv 0$ (see Lemma \ref{lem1}). Furthermore, Proposition \ref{prop1-p} implies that  $\mu_{2}=\mu_3=0,$ so $P=0$ and $M^3$ is Einstein. Thus, by \cite[Proposition 2.4]{HPW2}, we conclude that $M^3$ is isometric to either $\mathbb{H}^{3}$, or $\mathbb{R}\times_{e^r}\mathbb{R}^2$, or $[0,\infty)\times_{\cosh(r)}\mathbb{H}^2.$

On the other hand, suppose $\mu_2 \mu_3\neq 0.$ Then,
\begin{eqnarray*}
    \mu_2=\frac{1}{{2\mu_3}}[2\lambda-(m+2)\rho][\lambda-(m+1)\rho]=\frac{\alpha}{2\mu_3},   
\end{eqnarray*} where $\alpha=(2\lambda-(m+2)\rho)(\lambda-(m+1)\rho)$.
Using \eqref{system1}, we have $$Tr(P)=\mu_2+\mu_3=\frac{\alpha}{2\mu_3}+\mu_3.$$ Multiplying both sides by $2\mu_{3},$ we get the polynomial:
\begin{eqnarray}
\label{poly}
    2\mu_{3}^{2}+\alpha=2\mu_3(2\lambda-(m+2)\rho).   
\end{eqnarray}
Computing the discriminant for $\mu_3,$ we infer

\begin{eqnarray*}
    \Delta&=&4(2\lambda-(m+2)\rho)^2-8[2\lambda-(m+2)\rho][\lambda-(m+1)\rho]\\
    &=&4(2\lambda-(m+2)\rho)[(2\lambda-(m+2)\rho)-2\lambda+2(m+1)\rho]\\
    &=&4m\rho(2\lambda-(m+2)\rho)^2.
\end{eqnarray*} Therefore, the roots of \eqref{poly} are:

\begin{eqnarray*}
    \mu_3= \frac{(2\lambda-(m+2)\rho)\pm (2\lambda-(m+2)\rho)\sqrt{m\rho}}{2}.
\end{eqnarray*} Moreover, notice that the eigenvalue $\mu_2$ satisfies an expression equivalent to \eqref{poly} and, by \eqref{system1}, one sees that
\begin{eqnarray*}
    \mu_2= \frac{(2\lambda-(m+2)\rho)\mp (2\lambda-(m+2)\rho)\sqrt{m\rho}}{2}.
\end{eqnarray*} Hence, both $\mu_2$ and $\mu_3$ are constants. However, as stated in Corollary 4.1 of \cite{HPW2}, we have $\rho \leq 0.$ At the same time, since the eigenvalues are real, we consider $\Delta \geq 0.$ So, it follows that either $\rho = 0$ or $\rho = \frac{2\lambda}{m+2}$. In the latter case, the roots of \eqref{poly} would imply that both $\mu_2$ and $\mu_3$ vanish, which contradicts the fact that $\mu_2 \mu_3\neq 0.$ Therefore, we must have $\rho = 0,$ and from Eq. (\ref{p12an}), one sees that $R = 2\lambda.$ Then, by Proposition \ref{propT0}, we conclude that $T = 0.$ Thus, it suffices to apply Theorem \ref{ThmB} to deduce that $(M^3, g)$ is isometric, up to scaling, to either $\mathbb{R} \times \mathbb{H}^2$ or $[0, \infty) \times \mathbb{H}^2$.

Finally, if $\lambda=0,$ we invoke Proposition \ref{propL0} to conclude that $M^2$ is isometric to $[0,\infty)\times \mathbb{R}^2$ with the product metric $g=dt^2+g_{\mathbb{R}^2}.$  

This completes the proof of Theorem \ref{ThmA}.

\end{proof}

\end{document}